\def\cc{[\iota(c)]}
\def\VL{\widehat{V}}
\def\IA{I\kern-0.08em{A}}
\def\AN{A}
\def\Ind{\mathrm{Ind}}
\def\Afinite{\A^{\infty}}
\def\T{\mathbf{T}}
\def\Z{\mathbf{Z}}
\def\G{\mathbb{G}}
\def\A{\mathbb{A}}
\def\OL{\mathcal{O}}
\def\Q{\mathbf{Q}}
\def\F{\mathbf{F}}
\def\R{\mathbf{R}}
\def\m{\mathfrak{m}}
\def\SO{\mathrm{SO}}
\def\Ind{\mathrm{Ind}}
\def\Ext{\mathrm{Ext}}
\def\Hom{\mathrm{Hom}}
\def\GL{\mathrm{GL}}
\def\SL{\mathrm{SL}}
\def\BSL{\mathrm{BSL}}
\def\Frob{\mathrm{Frob}}
\def\PZ{\Gamma_{P}}
\newtheorem{theorem}{Theorem}[section]
\newtheorem{df}[theorem]{Definition}
\newtheorem{lemma}[theorem]{Lemma}
\newtheorem{prop}[theorem]{Proposition}
\newtheorem{corr}[theorem]{Corollary}
\newtheorem{conj}[theorem]{Conjecture}
\newtheorem{example}[theorem]{Example}
\newtheorem{remark}[theorem]{Remark}
\def\Kprime{L}
\def\Htw{\widetilde{H}}
\begin{document}

\title{Hecke operators on stable cohomology}
\author{Frank Calegari \and Matthew Emerton}
\thanks{The first author was supported in part by NSF Career Grant DMS-0846285.
The second author was supported in part by NSF Grant DMS-1003339.}
\subjclass[2010]{11F80, 11F75, 19F99.}

\maketitle

{\scriptsize
\tableofcontents
}

\section{Introduction}

Let $\Gamma$ denote a congruence subgroup of $\GL_n(\Z)$ of level $N$.
If $p$ is prime and $\F$ is a field of characteristic $p$, then each of the cohomology groups
 $H^d(\Gamma,\F)$  admits a natural action of  a commutative ring $\T$ of Hecke operators
 $T_{\ell,k}$ for $1 \le k \le n$ and for primes $\ell$ not dividing~$Np$.
  Let $[c] \in H^d(\Gamma,\F)$
denote an eigenclass for the action of $\T$ with eigenvalues $a(\ell,k) \in \F$. 
Conjecture~B of~\cite{Ash} predicts that,
associated to $[c]$, there exists a continuous semisimple Galois 
representation $\rho: G_{\Q} \rightarrow \GL_n(\F)$ unramified outside $Np$ such that
\[\sum (-1)^k \ell^{k(k-1)/2} a(\ell,k) X^k =  \det(I  - \rho(\Frob_{\ell})^{-1} X)
\tag{$\star$}.\]
Scholze has recently announced a proof of this conjecture~\cite{Scholze}. In this note,
we  study the cohomology in low degree relative to $n$. 
Our first theorem is the following.
Let $\omega: G_{\Q} \rightarrow \F^{\times}$ denote the mod-$p$ cyclotomic character.

\begin{theorem} \label{theorem:ash} Fix an integer $d$, and suppose that $n$ is sufficiently large
compared to $d$. Then for any
 eigenclass $[c] \in H^d(\Gamma,\F)$, there exists
a character $\chi$ of conductor $N$ and a Galois representation:
$$\rho = \chi \otimes \left(1 \oplus \omega \oplus \omega^2 \oplus \ldots \oplus \omega^{n-1} \right),$$
such that $\rho(\Frob_{\ell})$ satisfies $\star$.
\end{theorem}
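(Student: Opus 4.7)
The plan is to combine homological stability for arithmetic subgroups of $\GL_n(\Z)$ with a computation of the Hecke action in the stable range, and then to match the resulting Gaussian-binomial eigenvalues with those of the Eisenstein Galois representation via the classical $q$-binomial theorem.

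First, I would reduce to stable cohomology. For fixed level $N$, consider the tower of congruence subgroups $\Gamma_n \subset \GL_n(\Z)$ embedded by $g \mapsto \diag(g,1)$. By Borel's theorem on homological stability for $\GL_n(\Z)$, combined with its extension to arbitrary congruence subgroups (due to Charney and van der Kallen, bootstrapped from the principal case via a Hochschild--Serre argument), the restriction map $H^d(\Gamma_{n+1},\F) \to H^d(\Gamma_n,\F)$ is an isomorphism once $n \gg d$. Consequently any eigenclass $[c] \in H^d(\Gamma,\F)$ lifts uniquely to the stable cohomology $H^d_{\mathrm{st}} = \varprojlim_n H^d(\Gamma_n,\F)$, and it suffices to compute the action of each $T_{\ell,k}$ on $H^d_{\mathrm{st}}$.

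Second, I would analyze $T_{\ell,k}$ via its double coset description. The coset $\Gamma_n \diag(\ell I_k, I_{n-k}) \Gamma_n$ decomposes into $\binom{n}{k}_\ell$ right cosets (in bijection with $k$-planes in $\F_\ell^n$), and the Hecke action is the composite of restriction to an Iwahori-type subgroup, conjugation, and transfer. In the stable range, I expect this operator to act on a stable eigenclass by a single scalar, determined by its action on the unit class $1 \in H^0$: namely $\binom{n}{k}_\ell$ times the contribution $\chi(\ell)^{-k}$ of the $\det$-character of the $\GL_k$-block (where $\chi$ is the nebentypus of $[c]$) together with an overall Tate twist $\ell^{-k(n-1)}$ arising from the normalization of cohomological Hecke operators. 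This yields
\[ a(\ell,k) \;=\; \chi(\ell)^{-k}\,\ell^{-k(n-1)}\,\binom{n}{k}_\ell. \]

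Finally, to match with $(\star)$: applying the $q$-binomial theorem $\prod_{i=0}^{n-1}(1-q^iz) = \sum_k (-1)^k q^{k(k-1)/2}\binom{n}{k}_q z^k$ with $q=\ell^{-1}$ and $z=\chi(\ell)^{-1}X$, together with the identity $\binom{n}{k}_{\ell^{-1}} = \ell^{-k(n-k)}\binom{n}{k}_\ell$, one verifies directly that $\sum_k (-1)^k \ell^{k(k-1)/2} a(\ell,k) X^k = \det(I - \rho(\Frob_\ell)^{-1} X)$ for $\rho = \chi \otimes (1 \oplus \omega \oplus \cdots \oplus \omega^{n-1})$. The hard part of the argument is the second step: rigorously proving that the Hecke correspondence really does act by the predicted scalar on \emph{every} stable eigenclass, not merely on $H^0$. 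This will require tracking the parabolic decomposition of the double coset through the stability spectral sequence, and primes $\ell$ dividing $N$ will need to be handled separately, likely by an auxiliary change-of-level argument.
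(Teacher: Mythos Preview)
Your overall strategy --- reduce via homological stability and then compute the Hecke action in the stable range --- is exactly the paper's plan, and your $q$-binomial matching in the final step is fine. The genuine gap is precisely where you flag it: step~2, showing that $T_{\ell,k}$ acts by the predicted scalar on \emph{every} stable class. Your proposed mechanism (``tracking the parabolic decomposition of the double coset through the stability spectral sequence'') is vague, and your justification for the factor $\ell^{-k(n-1)}$ as a ``Tate twist normalization of cohomological Hecke operators'' is spurious: there is no such normalization in the double-coset definition. That factor has to come out of an honest argument, not be inserted by hand.

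The paper fills this gap by a different and cleaner device. Rather than taking a limit in $n$, one fixes $n$ large and, for each prime $\ell\nmid Np$, forms the direct limit
\[
\Pi_{n,\ell} \;=\; \varinjlim_{K_\ell} H^d(Y(K),\F)
\]
over all $\ell$-power levels. This is a smooth admissible $\GL_n(\Q_\ell)$-representation, and Charney's stability (applied at every $\ell$-power level) forces $\SL_n(\Z_\ell)$ to act trivially on it; since the normal closure of $\SL_n(\Z_\ell)$ in $\SL_n(\Q_\ell)$ is everything, admissibility then implies that the entire $\GL_n(\Q_\ell)$-action on $\Pi_{n,\ell}$ factors through $\det$. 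Once this is known, the Hecke operator $T(g)$ acts on the image of any class in $\Pi_{n,\ell}$ by $\langle T\rangle\cdot\deg(T)$, because each of the $\deg(T)$ coset representatives acts via the common value $\det(g)$. This is the whole computation; no spectral-sequence bookkeeping is required. A short separate argument (Hochschild--Serre plus induction on $d$) handles classes in the kernel of $H^d(Y(K),\F)\to\Pi_{n,\ell}$, showing they too are Eisenstein. Finally, the primes $\ell\mid N$ are simply excluded from the statement, so no ``auxiliary change-of-level argument'' is needed there.
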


\begin{remark}{\em
It follows from the argument that $n \ge 2d + 6$ will suffice.
}
\end{remark}

This theorem should be interpreted as saying that the action of Hecke on \emph{stable} cohomology
is trivial, which is indeed how we shall prove this theorem.
Note that if $[c]$ comes from characteristic zero, then the result follows from a theorem
of Borel~\cite{Borel}, which shows in particular
that the only rational cohomology in low degrees arises from the trivial
automorphic representation.

\begin{example} \emph{A special case of a construction due to Soul\'{e}~\cite{Soule} implies that
$K_{22}(\Z)$ contains an element of order $691$.\footnote{Indeed, $K_{22}(\Z) \simeq \Z/691\Z$.}
Associated to $[c]$ via the Hurewicz map is a stable class $[c]$ in $H_{22}(\GL_n(\Z),\F_{691})$ for all sufficiently large $n$.
Our theorem implies that the class $[c]$ is associated to
the representation
$\rho:=1 \oplus \omega \oplus \ldots \oplus \omega^{n-1}$ via $\star$. On the other hand, the existence of $[c]$
corresponds  --- implicitly ---  to
the existence of a \emph{non-semisimple} Galois representation $\varrho$ with
$\varrho^{\mathrm{ss}} = 1 \oplus \omega^{11}$ such that the extension
class in $\Ext^1(\omega^{11},1)$ is unramified everywhere.
 Is there a generalization of Ash's conjectures 
 that predicts the existence of a \emph{non-semisimple} Galois representations associated
to Eisenstein Hecke eigenclasses?}
\end{example}

The second theorem we prove is a \emph{stability} theorem for
\emph{completed homology} (and completed cohomology). Let $\Gamma(p^k)$
denote the principal congruence subgroup of level $Np^k$.   Recall
that the completed homology and cohomology groups
are defined as follows~\cite{CE}:
$$\Htw_*(\F_p):= \varprojlim H_*(\Gamma(p^r),\F_p)  \qquad \Htw^*(\F_p) :=\varinjlim H^*(\Gamma(p^r),\F_p),$$
$$\Htw_*(\Z_p):= \varprojlim H_*(\Gamma(p^r),\Z_p)  \qquad \Htw^*(\Q_p/\Z_p) :=\varinjlim H^*(\Gamma(p^r),\Q_p/\Z_p).$$
We prove the following:

\begin{theorem} \label{theorem:MF} 
The modules $\Htw_{d}(\Z_p)$  and $\Htw_{d}(\F_p)$ stabilize as $n \rightarrow \infty$ 
and are  finite $\Z_p$-modules and $\F_p$-vector spaces respectively.  Moreover, the actions of 
$\SL_n(\Q_p)$ on both modules are trivial.
\end{theorem}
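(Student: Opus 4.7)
The plan is to combine classical homological stability for congruence subgroups of $\GL_n(\Z)$ with Theorem~\ref{theorem:ash} in order to control both the stabilization in $n$ and the tower in $r$. The argument naturally decomposes into three stages: stability in $n$ at each finite level, stability as the $p$-level grows, and triviality of the $\SL_n(\Q_p)$-action.

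For the first stage, at each fixed $r$ I would invoke Charney's homological stability theorem for congruence subgroups of $\GL_n(\Z)$ (and its refinements by van der Kallen and others) to conclude that $H_d(\Gamma(p^r), \F_p)$ and $H_d(\Gamma(p^r), \Z_p)$ stabilize as $n \to \infty$, for $n$ in some range depending only on $d$. Crucially, the stability range should be uniform in $r$: this follows because the key input, namely high connectivity of the complex of partial bases in $\Z^n$, is purely combinatorial and insensitive to level structure.

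The second, most delicate, stage is to show that the inclusion-induced transition map
\[ H_d(\Gamma(p^{r+1}), \F_p) \to H_d(\Gamma(p^r), \F_p) \]
is an isomorphism in the stable range, so that $\Htw_d(\F_p) \cong H_d(\Gamma(N), \F_p)$; the right-hand side is finite because $\Gamma(N)$ admits a finite-type Eilenberg--MacLane space by Borel--Serre. I would try to deduce this from Theorem~\ref{theorem:ash}: any Hecke eigenclass in stable cohomology has Galois representation $\chi \otimes (1 \oplus \omega \oplus \cdots \oplus \omega^{n-1})$ with $\chi$ of conductor $N$, so the associated Hecke eigenvalues are entirely independent of the level at $p$. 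Hence no genuinely new Hecke eigensystems can appear at deeper levels, and a multiplicity-one argument (carried out at the Eisenstein maximal ideal of the Hecke algebra) should promote this to the statement that the level-raising map is an isomorphism. The analogous statement for $\Z_p$-coefficients follows by devissage from the $\F_p$ case together with the finite-type cohomology of each $\Gamma(p^r)$.

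For the third stage, the $\SL_n(\Z_p)$-action on $\Htw_d$ factors through the conjugation action of $\Gamma/\Gamma(p^r)$ on $H_d(\Gamma(p^r), \F_p)$ at each finite level; under the identification from the second stage the limiting action reduces to conjugation of $\Gamma(N)$ on $H_d(\Gamma(N), \F_p)$, which is trivial since inner automorphisms act as the identity on group homology. The extension from $\SL_n(\Z_p)$ to $\SL_n(\Q_p)$ is provided by Hecke operators at $p$: by Theorem~\ref{theorem:ash} these act by Eisenstein eigenvalues attached to a character $\chi$ unramified at $p$, and these agree with the eigenvalues on the trivial class, so the full $\SL_n(\Q_p)$-action is trivial. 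The main obstacle is the second stage: Theorem~\ref{theorem:ash} describes only semisimple Hecke eigensystems, whereas we need a structural statement about how the full cohomology modules compare under level-raising, which requires also controlling non-semisimple contributions and extensions among Eisenstein classes in the tower.
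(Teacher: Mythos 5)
Your proposal has a fundamental gap at the very first stage. Charney's homological stability theorem for congruence subgroups of $\GL_n(\Z)$ requires coefficients in a ring in which the level is invertible; it does \emph{not} give stabilization of $H_d(\Gamma(p^r),\F_p)$ in $n$ when the characteristic $p$ of the coefficients divides the level $p^r$. Indeed the paper opens \S\ref{section:lp} precisely by pointing out that na\"{\i}ve stability fails here: for $n \ge 3$, the congruence subgroup property gives $H_1(\Gamma(p),\Z) \simeq (\Z/p\Z)^{n^2-1}$, which grows without bound in $n$. So the claim that ``$H_d(\Gamma(p^r),\F_p)$ stabilizes as $n \to \infty$ for each fixed $r$, uniformly in $r$'' is false, and with it your reduction of $\Htw_d(\F_p)$ to $H_d(\Gamma(N),\F_p)$. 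Your second stage is doomed for the same reason: the transition maps $H_d(\Gamma(p^{r+1}),\F_p) \to H_d(\Gamma(p^r),\F_p)$ are far from being isomorphisms (already in degree $1$, where the map is zero for $n \ge 3$). Moreover, invoking Theorem~\ref{theorem:ash} here is not helpful: that theorem concerns the Hecke operators $T_{\ell,k}$ for $\ell \nmid Np$ and gives no information about level-raising at $p$.

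The paper's proof takes an entirely different route, driven by the fact that ordinary stability fails. The replacement for Charney is Putman's \emph{central stability} (Lemma~\ref{lemma:putman}), applied to the tower of completed homology groups $\Htw_{d,n}$, not to the groups $H_d(\Gamma(p^r),\F_p)$ themselves. Central stability bounds the Iwasawa dimension of $\Htw_{d,n}$ over $\Lambda_{\F_p}=\F_p[[\SL_n(\Z_p)]]$ uniformly in $n$ (Corollary~\ref{corr:corr}, via the Ardakov--Brown growth criterion of Proposition~\ref{prop:boundtwo}). One then has to rule out infinite-dimensional irreducible constituents of $\Htw^d_n$: Lemma~\ref{lemma:induction} shows any non-supercuspidal infinite-dimensional constituent has Iwasawa dimension at least $n-1$ (via Herzig's classification and a computation of invariants in an induced representation), while Lemma~\ref{lemma:induce} shows supercuspidals cannot occur, using a parabolic to realize $\Htw_{n+1}$ inside an induction $\Ind^G_P \Htw_n$. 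Together these force every constituent to be finite-dimensional, yielding both finiteness and triviality of the $\SL_n(\Q_p)$-action; central stability then upgrades ``bounded'' to ``stabilizes.'' None of this machinery—Iwasawa dimension theory, Herzig's classification, the supercuspidal dichotomy, or central stability—appears in your proposal, and it is exactly this machinery that circumvents the failure of ordinary stability at $\ell=p$.
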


The consequences of this result will be taken up in the companion
paper~\cite{CK}).

\begin{remark}
{\em
That Theorem~\ref{theorem:ash} (or something similar)
  might be true was suggested by Akshay Venkatesh in discussions with the
first author.  (See~\cite{Ash2} for a discussion of this conjecture
and some partial results.)
}
\end{remark}


\subsection{Acknowledgements}
The first author would like to thank Akshay Venkatesh, and both authors
are grateful to
Andy Putman for answering questions about his paper~\cite{Putman}.
 We would also like to thank Konstantin Ardakov, Jean-Fran{\c{c}}ois Dat, Florian Herzig,  Akshay Venkatesh, and Simon
Wadsley for helpful remarks. 
 An earlier version of this preprint  from 2012 established
Theorem~\ref{theorem:MF} \emph{conditionally} on Conjecture~\ref{conj:AW}; the 
unconditional proof of Theorem~\ref{theorem:MF} was found in October of 2013.
The first author would especially like to thank Florian Herzig for some
remarks concerning admissible representations that led to the proof of
Theorem~\ref{theorem:MF}.

\section{Arithmetic Manifolds}

Torsion free arithmetic groups are well known to act freely and properly discontinuously on their associated
symmetric spaces. This allows one to translate questions concerning the cohomology of arithmetic groups
into questions about cohomology of the associated arithmetic quotients. While one can study such spaces
independently of any adelic framework, it is more natural from the perspective of Hecke operators to work in this
generality, and this is the approach we adopt in this paper.

\subsection{Cohomology of arithmetic quotients}
Let $K_{\infty}$ denote a fixed maximal compact subgroup of $\GL_n(\R)$, and
let $K^0_{\infty}$ denote the connected component containing the identity.
One has isomorphisms $K_{\infty} \simeq \mathrm{O}(n)$ and $K^0_{\infty} \simeq \SO(n)$.
Let $\A$ denote the  adeles of $\Q$. For any finite set of places $S$, let $\A^{S}$ denote the adeles with
the components at the places $v | S$ missing, so (for example) $\A^{\infty}$ denotes the finite adeles.
Fix a compact open subgroup $K^{\ell}$ of $\GL_n(\A^{\ell,\infty})$,  let
$K_{\ell}$ denote a compact open subgroup of $\GL_n(\Z_{\ell})$, and let $K = K^{\ell} K_{\ell}$.
Let
$$Y(K) = \GL_n(\Q) \backslash \GL_n(\A)  \slash K^0_{\infty} K^{\ell} K_{\ell}$$
denote the corresponding arithmetic quotient.

\medskip

Assume that $\F$ is a finite field of characteristic
$p \ne \ell$. We now let $\Pi_{n,\ell}$ denote the direct limit
$$ \Pi_{n,\ell} =  \varinjlim_{K_{\ell}}  H^d(Y(K),\F).$$
For $K_{\ell}$ sufficiently small, the quotient $Y(K)$ is a manifold consisting of a finite number of connected components,
all of which are $K(\pi,1)$ spaces. In particular, if $K_{\ell}$ is the level $\ell^k$ congruence subgroup,  and $K^{\ell}$ is the open subgroup of tame level $N$, 
then writing the associated space as $Y(K,\ell^k)$  there is an isomorphism
$$H^d(Y(K,\ell^k),\F) \simeq \bigoplus_{\AN} H^d(\Gamma(\ell^k),\F),$$
where $\AN:= \Q^{\times} \backslash \A^{\infty,\times}/\det(K)$ is a ray class group of conductor $N$ times a power of $\ell$, and $\Gamma$ is the corresponding classical congruence
subgroup of level $N$.

\medskip

The module $\Pi_{n,\ell}$ is endowed tautologically with an action of $\GL_n(\Q_{\ell})$ that is \emph{admissible}; that is,
letting $G(\ell^k)$ denote the full congruence subgroup of $\GL_n(\Z_{\ell})$
of level $\ell^k$, 
we have that
$$ \dim_{\F} \Pi_{n,\ell}^{G(\ell^k)} < \infty$$
for any $k$.
This property records nothing more than the finite dimensionality of the cohomology groups 
$H^d(Y(K,\ell^k),\F)$.

\section{Stability for \texorpdfstring{$\ell \ne p$}{ell noteq p}}

We use the following two key inputs to prove our result. The first is as follows:

\begin{prop} \label{prop:charney} There is an isomorphism $\Pi_{n,\ell} \rightarrow \Pi_{n+1,\ell}$ for all
$n \ge 2d + 6$.
\end{prop}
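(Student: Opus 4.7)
The plan is to reduce the proposition to a classical homological stability theorem for principal congruence subgroups of $\GL_n(\Z)$. Using the decomposition recalled above, for $K_{\ell}$ equal to the principal congruence subgroup of level $\ell^k$ in $\GL_n(\Z_{\ell})$ one has
$$H^d(Y(K,\ell^k),\F) \simeq \bigoplus_{\AN} H^d(\Gamma(N\ell^k),\F),$$
where $\AN$ is a ray class group depending only on $\det(K)$. Since the block inclusion $\GL_n \hookrightarrow \GL_{n+1}$ preserves the image of the determinant of each finite-place factor $K_v$, the indexing set $\AN$ is unchanged when passing from $n$ to $n+1$; it therefore suffices to compare the individual summands for each fixed level.

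For the key step I would invoke Putman's stability theorem~\cite{Putman} (building on earlier work of Charney and van der Kallen) for principal congruence subgroups of $\GL_n(\Z)$: for each $d$ and each level $M$, the natural stabilization map
$$H_d(\Gamma_n(M),\F) \lra H_d(\Gamma_{n+1}(M),\F)$$
is an isomorphism as soon as $n \ge 2d + 6$, with the bound uniform in $M$. Dualizing (which is valid because $\F$ is a field and the groups in question are finite-dimensional) yields the corresponding cohomological stability, and taking the direct limit over the tower of levels $\ell^k$ then gives the desired isomorphism $\Pi_{n,\ell} \simeq \Pi_{n+1,\ell}$; the required naturality with respect to the transition maps $K_\ell \supset K'_\ell$ is built into the construction of the stabilization map.

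The main obstacle is extracting the specific numerical bound $n \ge 2d + 6$ from Putman's framework (which is formulated for rather general Dedekind domains and ideals), and ensuring that the stability range is genuinely uniform in the level $N\ell^k$. The uniformity is crucial, because without it one could not commute the stability isomorphism with the direct limit over $K_{\ell}$; fortunately, on inspecting the argument this uniformity is present. A secondary point to check is compatibility with the action of $\GL_n(\Q_\ell)$, but this is automatic: the stabilization map is defined group-theoretically and equivariantly with respect to the action of the Hecke algebra at $\ell$ on each side.
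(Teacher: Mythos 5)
Your overall strategy matches the paper's: reduce to a homological stability theorem for $\ell$-power congruence subgroups of $\GL_n(\Z)$ that is uniform in the level, dualize, and take the direct limit over levels. The decomposition over the ray class group $\AN$ is a correct observation but is not really needed; stability for each summand $H^d(\Gamma(N\ell^k),\F)$ already gives stability for the direct sum, and the paper simply invokes the stability result directly.

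The substantive difference is the reference you invoke. The paper cites Charney's 1984 paper (specifically \S5.4, Example~(iv)) rather than Putman. That choice matters: Charney's theorem is precisely what yields the explicit stable range $n \ge 2d + 6$, uniformly in the modulus, for coefficients in which the modulus is invertible. You flag as your ``main obstacle'' that it is unclear how to extract this bound from Putman's framework, and that concern is well-founded --- Putman's central stability machinery (which the paper does use, but only in the $\ell = p$ setting where ordinary homological stability genuinely fails) does not hand you the $2d+6$ range without further work. So the proof as written leaves a real gap at exactly the point you identify. The fix is simply to replace the Putman citation with Charney's theorem, after which your argument closes up and agrees with the paper's. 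The remark about $\GL_n(\Q_\ell)$-equivariance is somewhat beside the point for this proposition (the source and target carry actions of different groups); what one actually needs later is compatibility with the embedding $\GL_n \hookrightarrow \GL_{n+1}$ and with Hecke operators, which, as you say, follows from the group-theoretic construction of the stabilization map.
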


\begin{proof} For any fixed $\ell$-power congruence subgroup, this follows immediately
from the main result of Charney~\cite{Charney} (in particular, ~\S5.4 Example~(iv), p.2118). The theorem then follow by taking direct limits.
\end{proof}

\begin{lemma} \label{lemma:det} For $n \ge 2d + 6$, the action of $\GL_n(\Q_{\ell})$ on $\Pi_{n,\ell}$ is via
the determinant.
\end{lemma}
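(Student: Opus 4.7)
The strategy is to equip $\Pi_{n,\ell}$ with compatible $\GL_m(\Q_\ell)$-actions for all $m \ge n$ by iterating the stability isomorphism from Proposition~\ref{prop:charney} (this isomorphism is equivariant for the embedding $g \mapsto \diag(g,1)$), and then to combine this extra symmetry with a rigidity statement for $\SL_n(\Q_\ell)$. The lemma is equivalent to showing that $\SL_n(\Q_\ell)$ acts trivially on $\Pi_{n,\ell}$; since for $n \ge 2$ the group $\SL_n(\Q_\ell)$ is generated by the root unipotents $u_{ij}(x) = I + xE_{ij}$ (with $x \in \Q_\ell$ and $i \ne j$), it suffices to show that each such unipotent acts trivially.

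The key rigidity input is: any smooth homomorphism from $\SL_n(\Q_\ell)$ ($n \ge 2$) to a finite discrete group is trivial. Its kernel is an open normal subgroup (a finite-index closed subgroup of a locally compact Hausdorff group is automatically open by Baire category); by Tits' theorem $\SL_n(\Q_\ell)/\mu_n(\Q_\ell)$ is abstractly simple and infinite, so the only open normal subgroup of $\SL_n(\Q_\ell)$ is $\SL_n(\Q_\ell)$ itself. Since $\F$ is a finite field, $\GL_d(\F)$ is finite; consequently every smooth $\F$-representation of $\SL_n(\Q_\ell)$ on a finite-dimensional $\F$-vector space is trivial.

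To apply this rigidity to the infinite-dimensional $\Pi_{n,\ell}$, the plan is to show that the cyclic $\SL_n(\Q_\ell)$-submodule generated by any $v \in \Pi_{n,\ell}$ is in fact finite-dimensional; the rigidity above then forces $\SL_n(\Q_\ell)$ to act trivially on that submodule, and in particular on $v$. Finite-dimensionality is to be extracted from iterated stability (which presents $\Pi_{n,\ell}$ as $\Pi_{m,\ell}$ for arbitrarily large $m$, with the upper-left $\SL_n(\Q_\ell)$-action as the restriction of the ambient $\SL_m(\Q_\ell)$-action), the admissibility of $\Pi_{m,\ell}$ as a $\GL_m(\Q_\ell)$-module, and the rich conjugation and centralization structure of $\SL_n(\Q_\ell)$ inside $\SL_m(\Q_\ell)$: conjugation by signed permutations in $\GL_m(\Z_\ell)$ moves root unipotents to many other positions, while commutation with the complementary block $\GL_{m-n}(\Q_\ell)$ is automatic.

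The main obstacle is precisely this finite-dimensionality step: assembling Proposition~\ref{prop:charney}, admissibility, and the combinatorics of root unipotents and signed permutations inside the growing $\SL_m(\Q_\ell)$ into a rigorous argument that the $\SL_n(\Q_\ell)$-orbit of every vector in $\Pi_{n,\ell}$ spans a finite-dimensional subspace. Once this is in hand, the rigidity above closes the argument immediately.
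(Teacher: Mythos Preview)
Your proposal has a genuine gap, and you identify it yourself: the finite-dimensionality step is never carried out. Admissibility only tells you that each $v \in \Pi_{n,\ell}$ is fixed by some open compact $K(\ell^k)$; it does not by itself imply that the $\SL_n(\Q_\ell)$-submodule generated by $v$ is finite-dimensional, and the sketch involving conjugation by signed permutations and commutation with a complementary block does not produce such a bound. Without this, your rigidity statement (correct as stated) has nothing to bite on, and the argument does not close.

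The paper's proof avoids this difficulty entirely by working with the \emph{compact} group first. Charney's stability result already gives that $\SL_n(\Z_\ell)$ acts trivially on each $H^d(\Gamma(\ell^m),\F)$ in the stable range (indeed, these spaces have dimension independent of $n$, whereas the smallest nontrivial $\SL_n(\Z_\ell)$-representation over $\F$ has dimension growing with $n$), and hence $\SL_n(\Z_\ell)$ acts trivially on $\Pi_{n,\ell}$. One then uses essentially the same group-theoretic fact you invoke, but in the cleaner form: the normal closure of $\SL_n(\Z_\ell)$ inside $\SL_n(\Q_\ell)$ is all of $\SL_n(\Q_\ell)$. Thus any admissible $\GL_n(\Q_\ell)$-representation on which $\SL_n(\Z_\ell)$ acts trivially has all its irreducible subquotients one-dimensional, and extensions of characters still factor through the determinant. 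The moral is that the triviality you were trying to force on $\SL_n(\Q_\ell)$ via an unproved finite-dimensionality claim is obtained instead by first getting it for free on the maximal compact $\SL_n(\Z_\ell)$ from stability, and only then propagating to $\SL_n(\Q_\ell)$.
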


\begin{proof} By~\cite{Charney}, the action
of  $\SL_n(\Z_{\ell})$  on
$H^*(\Gamma(\ell^m),\F)$ is trivial in the stable range (see also
Lemma~7 of~\cite{Ash2}), and hence the action
of $\SL_n(\Z_{\ell})$ on $\Pi_{n,\ell}$ is also trivial.
Let $V$ be an irreducible sub-quotient of $\Pi_{n,\ell}$. 
The only admissible representations of $\GL_n(\Q_{\ell})$ that are trivial on
$\SL_n(\Z_{\ell})$ are given by characters (the normal closure of
$\SL_n(\Z_{\ell})$ inside $\SL_n(\Q_{\ell})$ is the entire group), and it follows that every irreducible constituent of 
$\Pi_{n,\ell}$ is a character. Since the action of $\GL_n(\Q_{\ell})$ on the extension of any two characters still acts through
the determinant, the result follows for $\Pi_{n,\ell}$.
\end{proof}

\begin{remark} \emph{One easy way to see that $\SL_n(\Z_{\ell})$ must act trivially on
$H^*(\Gamma(\ell^m),\F)$ in the stable range is that the latter module has fixed dimension for all $n$,
whereas the smallest non-trivial representation of $\SL_n(\Z_{\ell})$ over $\F$ has unbounded dimension as $n$ increases.}
\end{remark}

We now sketch two further proofs of Lemma~\ref{lemma:det} (one incomplete).
The first proof ``explains'' the triviality of the action of $\SL_n(\Q_{\ell})$ by showing that
$\Pi_{n,\ell}$ is very small in a precise way.

\begin{prop} \label{prop:gabber} For sufficiently large $n$, there is, for all $m$, 
an isomorphism 
$$H^d(\Gamma(\ell),\F) 
\simeq H^d(\Gamma(\ell^m),\F).$$
\end{prop}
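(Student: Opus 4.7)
The plan is to combine the Hochschild--Serre spectral sequence for the normal inclusion $\Gamma(\ell^m) \triangleleft \Gamma(\ell)$ with the triviality of the $\SL_n(\Z_\ell)$-action on $\Pi_{n,\ell}$ established (via Charney) in the proof of Lemma~\ref{lemma:det}.

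The quotient $Q := \Gamma(\ell)/\Gamma(\ell^m)$ is a finite $\ell$-group: by strong approximation it is naturally the kernel of $\SL_n(\Z/\ell^m\Z) \twoheadrightarrow \SL_n(\Z/\ell\Z)$. Because $\F$ has characteristic $p \ne \ell$, the order $|Q|$ is invertible in $\F$, so $H^i(Q, V)$ vanishes for all $i > 0$ and all $\F[Q]$-modules $V$. The Hochschild--Serre spectral sequence
$$E_2^{i,j} = H^i\bigl(Q,\, H^j(\Gamma(\ell^m),\F)\bigr) \;\Longrightarrow\; H^{i+j}(\Gamma(\ell),\F)$$
therefore collapses at $E_2$, and restriction identifies $H^d(\Gamma(\ell),\F)$ canonically with $H^d(\Gamma(\ell^m),\F)^Q$.

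It suffices to show that $Q$ acts trivially on $H^d(\Gamma(\ell^m),\F)$. As recalled in Section~2, the space $\Pi_{n,\ell}^{G(\ell^m)}$ decomposes as a direct sum of copies of $H^d(\Gamma(\ell^m),\F)$ indexed by the corresponding ray class group, and carries a Hecke action of $\GL_n(\Z_\ell)/G(\ell^m) = \GL_n(\Z/\ell^m\Z)$. Under this action, $\SL_n(\Z/\ell^m\Z)$ preserves each summand, since the ray class group decomposition is recorded by the determinant. The subgroup $Q$ embeds into $\SL_n(\Z/\ell^m\Z)$ via the reduction of $\Gamma(\ell) \subset \SL_n(\Z)$, and the standard compatibility between adelic and classical Hecke operators identifies the resulting $Q$-action on the summand indexed by the trivial idele class with the usual conjugation action (i.e.\ the Hochschild--Serre action). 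By Lemma~\ref{lemma:det}---more precisely, the invocation of Charney's stability theorem in its proof---the Hecke action of $\SL_n(\Z_\ell)$ on $\Pi_{n,\ell}$, and hence of $\SL_n(\Z/\ell^m\Z)$ on $\Pi_{n,\ell}^{G(\ell^m)}$, is trivial once $n \ge 2d+6$. Thus $Q$ acts trivially on each summand, as required.

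The main point needing care is the compatibility claim above: the Hecke action of the reduction of $\Gamma(\ell)$ on the identity component of $\Pi_{n,\ell}^{G(\ell^m)}$ must coincide with the conjugation action used in Hochschild--Serre. This is the standard dictionary between adelic and classical pictures at the level, handled by fixing consistent coset representatives for $\GL_n(\Q) \backslash \GL_n(\A) / K^0_\infty K$ and tracing through the right-translation action.
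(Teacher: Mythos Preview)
Your argument is correct, and in fact it is precisely the alternative proof the paper itself sketches in the paragraph immediately following its proof of the proposition: since $\Gamma(\ell)/\Gamma(\ell^m)$ is an $\ell$-group and $\mathrm{char}(\F) = p \ne \ell$, transfer (equivalently, the collapse of Hochschild--Serre) identifies $H^d(\Gamma(\ell),\F)$ with the $Q$-invariants of $H^d(\Gamma(\ell^m),\F)$, and Charney's theorem gives the triviality of the $\SL_n(\Z_\ell)$-action in the stable range.

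The paper's \emph{main} proof, however, takes a genuinely different route. It uses Charney's identification of the stable cohomology of $\Gamma(\ell^m)$ with the cohomology of the homotopy fibre of $\BSL(\Z)^+ \to \BSL(\Z/N\ell^m)^+$, and then appeals to Gabber's rigidity theorem to show that the map $\BSL(\Z/N\ell^m)^+ \to \BSL(\Z/N\ell)^+$ is an $\F$-homology isomorphism; the Zeeman comparison theorem then transports this to the fibres. The reason the paper prefers this argument is narrative: Proposition~\ref{prop:gabber} is being offered as a \emph{second} proof of Lemma~\ref{lemma:det}, by showing that $\Pi_{n,\ell}$ is ``small'' (its invariants do not grow with the level), from which triviality of the $\SL_n(\Q_\ell)$-action follows. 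Your proof instead feeds the triviality of the $\SL_n(\Z_\ell)$-action---the very content of the first proof of Lemma~\ref{lemma:det}---back in as an input, so it derives Proposition~\ref{prop:gabber} from Lemma~\ref{lemma:det} rather than the other way around. Both are valid as proofs of the bare statement; the Gabber--rigidity argument earns its keep by providing an independent conceptual explanation for why the levels collapse.
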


\begin{proof} 
The main result of~\cite{Charney} identifies the stable cohomology groups $H^*(\Gamma(\ell),\F)$
as well as the groups  $H^*(\Gamma(\ell^m),\F)$ with the cohomology of the homotopy fibre of the map
$\BSL(\Z)^{+} \rightarrow \BSL(\Z/N \ell)^{+}$ and
$\BSL(\Z)^{+} \rightarrow \BSL(\Z/N \ell^m)^{+}$ respectively (under the assumption that $\F$ has
characteristic prime to $N \ell$). The natural
map between these fibre sequences induces an isomorphism on $\F$-homology, trivially for
$\BSL(\Z)^{+}$ and for $ \BSL(\Z/N \ell^m)^{+} \rightarrow \BSL(\Z/N \ell)^{+}$ by Gabber's rigidity theorem~\cite{Gabber}. 
By the Zeeman comparison theorem, this implies
the result for the homotopy fibre.
\end{proof}

More details of this argument can  be found in the companion paper~\cite{CK} 
(see Remark~1.13). Alternatively, one can prove the result directly in this case, since the transfer
map from $\Gamma(\ell^m)$ to $\Gamma(\ell)$ is an isomorphism over any ring such that  $\ell$ is inverted (the group $\Gamma(\ell)/\Gamma(\ell^m)$ is an $\ell$-group), and hence it suffices to show that the $\SL_n(\Z_{\ell})$ action on
$H^d(\Gamma(\ell^m),\F)$ is trivial.
 Lemma~\ref{lemma:det} is an easy consequence of Proposition~\ref{prop:gabber}. 
For our next argument, we begin with the following:

\begin{prop}  \label{prop:GK} Let $\VL$ be an irreducible admissible infinite dimensional
representation of $\GL_n(\Q_{\ell})$ in characteristic zero.
Then the 
  Gelfand--Kirillov dimension of $\VL$ is at least $n-1$.
  \end{prop}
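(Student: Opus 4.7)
The plan is to bound the Gelfand--Kirillov dimension of $\VL$ from below via its wave-front set, using the Howe--Harish-Chandra local character expansion and the structure of nilpotent orbits in $\mathfrak{gl}_n$. First, by the local character expansion, there is a neighborhood $U$ of the identity in $\GL_n(\Q_\ell)$ on which the distribution character of $\VL$ is a finite linear combination $\sum_{\mathcal{O}} c_\mathcal{O}(\VL)\,\hat\mu_\mathcal{O}$, where $\mathcal{O}$ ranges over nilpotent coadjoint orbits in $\mathfrak{gl}_n(\Q_\ell)^*$ and $\hat\mu_\mathcal{O}$ denotes the Fourier transform of the invariant measure on $\mathcal{O}$. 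The wave-front set $\mathrm{WF}(\VL)$ is the union of the closures of the maximal orbits $\mathcal{O}$ with $c_\mathcal{O}(\VL) \neq 0$.

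Next, I would invoke the theorem of Moeglin--Waldspurger identifying the Gelfand--Kirillov dimension of an irreducible admissible representation of a $p$-adic reductive group with $\tfrac{1}{2}\dim \mathrm{WF}(\VL)$; for $\GL_n$, moreover, $\mathrm{WF}(\VL)$ is the closure of a single orbit. Since $\VL$ is assumed infinite-dimensional, $\mathrm{WF}(\VL)$ must contain a non-zero orbit: if only the trivial orbit occurred in the expansion, then $\Theta_{\VL}$ would be constant on $U$, forcing $\VL$ to be one-dimensional. The smallest non-zero nilpotent orbit in $\mathfrak{gl}_n$ is the minimal orbit $\mathcal{O}_{\min}$, corresponding to the partition $(2,1^{n-2})$; a direct centralizer computation yields $\dim \mathcal{O}_{\min} = 2(n-1)$. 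Therefore $\dim \mathrm{WF}(\VL) \geq 2(n-1)$, and hence GK-$\dim(\VL) \geq n-1$.

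The main obstacle is not so much conceptual as bibliographic: one must carefully invoke the precise equality of GK dimension with half the wave-front dimension for admissible representations of $p$-adic groups, which has a somewhat scattered proof in the literature. As an alternative, more self-contained route, one could classify irreducible admissible representations of $\GL_n(\Q_\ell)$ as subquotients of parabolic inductions from supercuspidal representations of Levi subgroups, then directly compute GK dimensions Bernstein-block by Bernstein-block and verify that the minimum $n-1$ is attained precisely by twists of the Steinberg representation.
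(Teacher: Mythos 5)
Your argument is essentially identical to the paper's: both proofs use the Howe--Harish-Chandra local character expansion to express the character as $\sum_{\mathcal{O}} c_{\mathcal{O}} \widehat{\mu}_{\mathcal{O}}$, identify the Gelfand--Kirillov dimension with half the dimension of the maximal contributing nilpotent orbit (the paper derives this from the growth-of-invariants characterization rather than explicitly citing Moeglin--Waldspurger, but it is the same fact), observe that infinite-dimensionality forces a non-zero orbit to appear, and then conclude from the fact that the minimal non-zero nilpotent orbit in $\mathfrak{gl}_n$ has dimension $2(n-1)$. The paper even mentions a variant of your suggested alternative (classification via parabolic induction) elsewhere in the same section, so the two write-ups line up closely.
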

  
\begin{proof}  The Gelfand--Kirillov dimension of $\VL$ can be interpreted in two ways.
On the one hand, it is that value of $d\geq 0$ for which $\dim V^{G(\ell^k)} \asymp \ell^{dk}$;
on the other hand, if we pull back the 
 Harish--Chandra character 
to a neighbourhood of $0$ in $\mathfrak g$, the Lie algebra of $\GL_n(\Q_{\ell})$,
via the exponential map, then a result of Howe and Harish--Chandra shows that
the resulting function $\chi$ admits an expansion
$$\chi = \sum_{\mathcal O} c_{\mathcal O} \widehat{\mu}_{\mathcal O},$$
where the sum ranges over the nilpotent $\GL_n(\Q_{\ell})$-orbits in $\mathfrak g^{\vee}$,
and $\widehat{\mu}_{\mathcal O}$ denotes the Fourier transform of a suitably normalized
$\GL_n(\Q_{\ell})$-invariant measure on $\mathcal O$ (see e.g.\ the introduction
of \cite{GS}, whose notation we are following here,
for a discussion of these ideas); the Gelfand--Kirillov dimension
of $\VL$ is then also equal $\dfrac{1}{2}\max_{\mathcal O \, | \, c_{\mathcal O} \neq 0}
\dim \mathcal O$ (as one sees by pairing the characteristic function
of $G(\ell^k)$ --- pulled back to $\mathfrak g$ --- against $\chi$). 
Since $\VL$ has Gelfand--Kirillov dimension $0$ if and 
only if it is finite dimensional, and since the minimal non-zero nilpotent
orbit in $\mathfrak g$ has dimension $2(n - 1)$
(see~\S1.3 p.~459 and Table~1 on p.~460 of~\cite{MR698349}),
the proposition follows.
\end{proof}

Let $V$ be an irreducible sub-quotient of $\Pi_{n,\ell}$. Since the cohomology of pro-$\ell$ groups vanishes
in characteristic $p \ne \ell$, we deduce that, for all $k$,
$$\dim V^{G(\ell^k)} \le \dim \Pi^{G(\ell^k)}_{n,\ell} 
= \dim \Pi^{G(\ell^k)}_{m,\ell},$$
where $m = 2d + 6$ is fixed, and where the equality follows from Propostion~\ref{prop:charney}.
Yet, for a fixed $m$, there is the trivial inequality relating the growth of cohomology to the growth of the index
(up to a constant) and thus
$$ \dim \Pi^{G(\ell^k)}_{m,\ell} \ll [\GL_m(\Z_{\ell}):G(\ell^k)] \ll \ell^{k m^2}.$$
 The bound on invariant growth
then implies that the  Gelfand--Kirillov dimension of $\VL$ is at most $m^2$.
Suppose that $V$ lifted to a (necessarily irreducible) representation $\VL$ in characteristic $0$. We deduce a corresponding bound for the invariants of $V$.
By Proposition~\ref{prop:GK},
this is a contradiction for sufficiently large $n$
unless $\VL$  and thus $V$ is one-dimensional. 
In general,  it follows from the main theorem of Vign{\'e}ras (\cite{Vigneras}, p.182) that any irreducible
admissible irreducible representation $V$ of $\GL_n(\Q_{\ell})$ over $\F$ lifts 
to a  \emph{virtual} representation $[\VL]$ in characteristic $0$.  We \emph{expect} that $[\VL]$
may be chosen to contain a unique irreducible representation of largest Gelfand--Kirillov dimension.
If this is so, then the bound of Proposition~\ref{prop:GK} applies also to mod-$p$ representations of
$\GL_n(\Q_{\ell})$, and it would follow that every irreducible constituent of 
$\Pi_{n,\ell}$ is a character. Since the action of $\GL_n(\Q_{\ell})$ on the extension of any two characters still acts through
the determinant, this would yield a third proof of Proposition~\ref{lemma:det}.

\begin{remark}  \emph{In a preliminary version of this paper, we made the error of assuming that any irreducible 
$\GL_n(\Q_{\ell})$-representation $V$ in characteristic $p$ lifted to a characteristic zero representation $\VL$.
We thank Jean-Fran{\c{c}}ois Dat for pointing out to us that this is not
always the case, and also for suggesting a possible approach to proving
our modified expectation via induction on the ordering of partitions associated to $V$.  Because of the availability of other arguments, however, we have not made a serious attempt to carry out
this one.
Although this  alternate argument is therefore  incomplete,
it is the one that is most amenable to generalization to the case of
$\ell = p$ (see~\S\ref{section:lp}),
and so for psychological reasons we have presented it here.}
\end{remark}

\section{Hecke Operators}
Let $g \in \GL_n(\A^{\infty})$ be invertible. Associated to $g$ one has the Hecke operator $T(g)$,
 defined by considering the composition:
$$H^{\bullet}(Y(K),\F) \rightarrow
H^{\bullet}(Y(g K g^{-1} \cap K),\F) \rightarrow H^{\bullet}(Y(K  \cap g^{-1} K g),\F) \rightarrow
H^{\bullet}(Y(K),\F),$$
the first map coming from the obvious inclusion, the final coming from
corestriction map. The Hecke operators preserve $H^{\bullet}(Y(K),\F)$, but
not necessarily
the cohomology of the connected components. Indeed, the action on the component
group is via the determinant map on $\G(\Afinite)$ and the natural action of
$\A^{\infty,\times}$ on $\AN$.
The Hecke operator $T_{\ell,k}$ is defined by taking $g$ to be
the diagonal matrix consisting of $k$ copies of $\ell$ and $n - k$ copies of $1$.
The algebra $\T$ of endomorphisms generated by $T_{\ell,k}$ on $H^*(Y(K),\F)$ for $\ell$ prime to the level of $K$
generates a commutative algebra. For any such $T=T(g)$, let $\langle T \rangle$ denote the isomorphism of $H^*(Y(K),\F)$
that acts by permuting the components according to the image of $\det(g)$ in~$A$.

\medskip

There are many definitions of the notion of ``Eisenstein'' in many different contexts. For our purposes, the following very
restrictive definition is appropriate:

\begin{df} A cohomology class $[c] \in H^d(Y(K),\F)$ is  Eisenstein if $T  [c] = \langle T \rangle \deg(T) [c]$ for any Hecke operator $T$.
A maximal ideal $\m$ of $\T$ is Eisenstein if and only if $\m$ contains  $T  -  \deg(T)$ for all $T$ with
$\langle T \rangle = 1$ in $\AN$.
\end{df}

If $[c] \in H^d(\Gamma,\F)$ is a Hecke eigenclass such that $T[c] = \deg(T) [c]$ for all $\langle T \rangle = 1$, then
$[c]$ is necessarily Eisenstein, and moreover $T [c] = \chi(\langle T \rangle) \deg(T)[c]$ for some character $\chi:\AN \rightarrow \F^{\times}$
of $\AN$. 
By class field theory, $\chi$ corresponds to a finite order character of $G_{\Q}$ of conductor dividing $N$. 
A easy computation of $\deg(T_{\ell,k})$ then implies that $\star$ will be satisfied  with
$\rho$ equals  $\chi \otimes (1 \oplus \omega \oplus \ldots \oplus \omega^{n-1})$ if and only if $[c]$ is Eisenstein.

\subsection{Proof of Theorem~\ref{theorem:ash}}
Given any eigenclass $[c] \in H^d(Y(K),\F)$, consider its image $\cc$ in $\Pi_{n,\ell}$.
If this image is non-zero,
we can determine the eigenvalues of $[c]$ by determining those of
$\cc$.
Since the $\GL_n(\Q_{\ell})$-action on $\Pi_{n,\ell}$ factors through $\det$,
the action of any $g \in \GL_n(\Q_{\ell})$
on $\cc$, as well as the action of the $\deg(T(g))$ representatives
in the double coset decomposition of $g$, is via~$\langle T \rangle$;
hence
$T \cc - \langle T \rangle \deg(T) \cc$ is zero in $\Pi_{n,\ell}$.
It follows that either $[c]$ is Eisenstein or it lies
in the kernel of the map $H^d(Y(K),\F) \rightarrow \Pi_{n,\ell}$. Hence Theorem~\ref{theorem:ash} follows
by induction from the following Lemma.

\begin{lemma} \label{lemma:cong} Let $\m$ be a maximal ideal of $\T$ that is not Eisenstein, and suppose that
$H^i(Y(L),\F)_{\m} = 0$ for all $i < d$ and $L = K^{\ell} L_{\ell}$
for all compact normal open subgroups $L_{\ell} \subset \GL_n(\Z_{\ell})$.
If $[c]$ lies in the kernel of the map $H^d(Y(K),\F) \rightarrow \Pi_{n,\ell}$, then $[c]$ is Eisenstein.
\end{lemma}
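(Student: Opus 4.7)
The plan is to apply the Hochschild--Serre spectral sequence to a finite Galois cover $Y(L) \to Y(K)$, localized at the non-Eisenstein maximal ideal $\m$; the hypothesis that the lower-degree $\m$-localized cohomology vanishes will collapse the spectral sequence so that the restriction map is injective on the $\m$-part of $H^d$. A class $[c]$ in the kernel of the map to $\Pi_{n,\ell}$ then has zero $\m$-localization for every non-Eisenstein $\m$, so it lies entirely in the Eisenstein part of $H^d(Y(K),\F)$.

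First I would translate the kernel hypothesis: saying that $[c]$ maps to zero in $\Pi_{n,\ell}$ means that there is a compact open subgroup $K_\ell' \subset K_\ell$ for which the restriction of $[c]$ to $H^d(Y(K^\ell K_\ell'),\F)$ vanishes. After shrinking $K_\ell'$ further if necessary, I may assume it contains a compact open subgroup $L_\ell$ which is normal in $\GL_n(\Z_\ell)$, for instance a principal congruence subgroup $G(\ell^m)$ with $m$ sufficiently large. Writing $L = K^\ell L_\ell$, the restriction of $[c]$ to $H^d(Y(L),\F)$ is then zero.

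Next I would apply the Hochschild--Serre spectral sequence for the finite Galois covering $Y(L) \to Y(K)$ with deck group $K_\ell/L_\ell$,
$$E_2^{p,q} = H^p(K_\ell/L_\ell, H^q(Y(L),\F)) \Longrightarrow H^{p+q}(Y(K),\F),$$
and observe that it is equivariant for the Hecke algebra $\T$, whose generators are double coset operators at primes different from $\ell$ and thus commute both with the level-change at $\ell$ and with the $K_\ell/L_\ell$-action. Localizing at $\m$, the inductive hypothesis $H^q(Y(L),\F)_\m = 0$ for $q < d$ forces every $E_2^{p,q}$ with $q < d$ to vanish after localization. Consequently the only associated graded piece surviving for $H^d(Y(K),\F)_\m$ is the edge piece, and the composition
$$H^d(Y(K),\F)_\m \longrightarrow \bigl(H^d(Y(L),\F)_\m\bigr)^{K_\ell/L_\ell} \hookrightarrow H^d(Y(L),\F)_\m$$
is injective. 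Since the restriction of $[c]$ vanishes in $H^d(Y(L),\F)$, its $\m$-localization $[c]_\m$ must be zero in $H^d(Y(K),\F)_\m$; as this applies to every non-Eisenstein $\m$, the class $[c]$ is supported only on Eisenstein Hecke systems and is therefore Eisenstein.

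The main technical obstacle is ensuring that the spectral sequence is genuinely $\T$-equivariant and compatible with $\m$-localization --- the key point being that the deck transformations at $\ell$ and the Hecke operators at primes different from $\ell$ act on disjoint parts of the adelic data and so commute, which allows the localization to be taken term by term on the $E_2$-page. A secondary technical point is choosing $L_\ell$ small enough that $K_\ell/L_\ell$ acts freely on $Y(L)$, so that the cover is a genuine principal bundle and the Cartan--Leray spectral sequence applies in its usual form; this is automatic once $L_\ell$ is contained in a sufficiently deep principal congruence subgroup.
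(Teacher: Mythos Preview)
Your proof is correct and follows essentially the same route as the paper: both localize the Hochschild--Serre spectral sequence for the cover $Y(L)\to Y(K)$ at a non-Eisenstein $\m$ and use the vanishing hypothesis in degrees $<d$ to conclude that restriction is injective on the $\m$-part. The only difference in presentation is that the paper explicitly identifies the kernel of restriction with copies of $H^d(K/L,\F)$ and then verifies via a commutative diagram that the Hecke action on this group is $T\mapsto \langle T\rangle\deg(T)$ (hence Eisenstein), whereas you invoke the hypothesis directly for $q=0$ to kill the entire bottom row of the $E_2$-page; both arguments amount to the same thing, since the hypothesis $H^0(Y(L),\F)_\m=0$ is precisely the statement that the Hecke action on components is Eisenstein.
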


 \begin{proof} By assumption, the class $[c]$ lies in the kernel of the map
 $$H^d(Y(K),\F) \rightarrow H^d(Y(L),\F)$$
 for some $L$ as above. It suffices to show that this kernel vanishes after completion at any
 non-Eisenstein prime $\m$. 
  By assumption, the cohomology of $Y(K)$ localized at $\m$ vanishes in degree less than $d$. 
  The localization of 
  the Hochschild--Serre spectral sequence at $\m$ thus becomes an inflation-restriction sequence,
  from which we deduce that
    the set of classes in $H^d(Y(K),\F)_{\m}$ that are annihilated under the level raising map  is isomorphic
   to a finite number of copies of the group $H^d(K/L,\F)_{\m}$.
   Let $g \in \GL_n(\A)$. There is a canonical isomorphism
$$(g K g^{-1} \cap K)/(g \Kprime g^{-1} \cap \Kprime) \simeq K/\Kprime.$$
Hence we obtain a commutative diagram:
  $$
 \begin{diagram}
 H^d(K/\Kprime,\F) & \rTo & H^d(K/\Kprime,\F) &   \rTo & H^d(K/\Kprime,\F) &  \rTo^{\phi \quad} & H^d(K/\Kprime,\F) \\
 \dTo & & \dTo & & \dTo & & \dTo \\
 H^{d}(Y(K),\F) & \rTo &  
H^{d}(Y(g K g^{-1} \cap K),\F) & \rTo & H^{d}(Y(K  \cap g^{-1} \cap K g),\F) & \rTo  &
H^{d}(Y(K),\F), \\
\end{diagram}
$$
It follows that the action of $T = T(g)$ is given by multiplication by $\det(T)$ composed with the permutation 
$\langle T \rangle = \det(g) \in \AN$ of components. This action is Eisenstein, and  hence the kernel vanishes for any non-Eisenstein
prime $\m$.
   \end{proof}

This completes the proof of Theorem~\ref{theorem:ash}.

\begin{remark} \emph{By the universal coefficient theorem,
one has 
$$\Pi^{\vee}_{n,\ell}:=\Hom(\Pi_{n,\ell},\F) = \varprojlim H_d(Y(K),\F).$$
 Suppose that one defines 
 $$\Pi^{\vee}_{n,\ell}(\Z_{\ell}) = \varprojlim H_d(Y(K),\Z_{\ell}).$$
 Then it follows from
Lemma~\ref{lemma:det} for $d$ and $d+1$ that the action of $\GL_n(\Q_{\ell})$ on
$\Pi^{\vee}_{n,\ell}(\Z_{\ell})$  is via the determinant for sufficiently large $n$.}
\end{remark}

 \begin{remark} \emph{One can ask whether a Hecke operator $T$ with
 $\langle T \rangle$ trivial in $\AN$ acts via the degree on the entire cohomology group
 $H^d(\Gamma,\Z)$. Our argument shows that, for such $T$, 
 the image of $T - \deg(T)$ on $H^d(\Gamma,\Z)$ is --- in the notation of~\cite{CV} --- \emph{congruence}; i.e.,
 it lies in the kernel of the map $H^d(\Gamma,\Z) \rightarrow H^d(\Gamma(M),\Z)$
 for some $M$.
 }
 \end{remark}
 
 \begin{remark} \emph{The main theorem and its proof remain valid, and essentially unchanged, if one
replaces $\GL_n(\Z)$ by $\GL_n(\OL_F)$ for any number field $F$. }
\end{remark}

 \section{Stability for \texorpdfstring{$\ell = p$}{ell = p}}
 \label{section:lp}
It is natural to wonder whether
 the methods of this paper can be extended to $\ell = p$.
One obvious obstruction is that the na\"{\i}ve notion of stability fails, even for $d = 1$:
by the congruence subgroup property~\cite{BLS,Mennicke},
and letting 
$\SL(n,p^k)$ denote the principal congruence subgroup 
of $\SL_n(\Z)$ of level $p^k$,  
one easily sees that, for $n \ge 3$,
$$H_1(\Gamma(p),\Z) \simeq \Gamma(p)/\SL(n,p^2) \simeq  (\Z/p \Z)^{n^2 - 1},$$
which clearly do \emph{not} stabilize.
 On the other hand, the non-trivial classes in these groups arise
 for purely local reasons, namely, 
as the pullback of classes from the homology of the $p$-congruence subgroup of the congruence completion
$\SL_ n(\Z_p)$. This latter homology is  evidently
insensitive to the finer arithmetic properties of $\SL_n(\Z)$,
and so it is natural to excise the cohomology arising
for ``local'' reasons and consider what remains.
We do this via the method of completed cohomlogy,
as discussed in \cite{CEAnn,CE,Emerton}, whose definition
for a finite field $\F$ we recall below.
It is completed cohomology that will provide
the analogue of the modules $\Pi_{n,\ell}$ defined above.

We fix, once and for all, a tame level $N$,
and let $\Gamma(p^k)$ denote the principal congruence subgroup 
of $\SL_n(\Z)$ of level $Np^k$.  

\begin{df} The completed cohomology groups $\Htw^{d}_n$ are defined as follows~\cite{CE}:
$$\Htw^{d}_n := \varinjlim H^d(\Gamma(p^k),\F)$$
\end{df}

Although this definition is formally the same as $\Pi_{n,\ell}$, the theory when
$\ell = p$ is quite different to the $\ell \neq p$ case,
due to the non-semisimple nature of representations of pro-$p$ groups
on mod-$p$ vector spaces.
However, we still prove the following result:

\begin{theorem}[Stability of completed cohomology] \label{theorem:stable} For $n$ sufficiently large, the modules $\Htw^{d}_n$ are finite
dimensional over $\F$
 and are  independent of $n$.
 \end{theorem}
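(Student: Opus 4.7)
The plan is to mirror the Gelfand--Kirillov dimension argument sketched (incompletely) in Section~3 for the case $\ell \ne p$, but now in the genuinely mod-$p$ setting. First I would equip $\Htw^d_n$ with its natural admissible smooth $\SL_n(\Q_p)$-action (from the machinery of~\cite{CE}), so that $\Htw^{d,G(p^k)}_n = H^d(\Gamma(p^k),\F)$ after accounting for the component group. Applying Charney's stability theorem directly at each fixed level $p^k$ shows that $\dim_\F H^d(\Gamma(p^k),\F)$ is independent of $n$ as soon as $n \ge 2d+6$. Consequently, the growth rate in $k$ of the dimensions $\dim_\F \Htw^{d,G(p^k)}_n$ is bounded by a function depending only on $d$, and in particular the canonical (Gelfand--Kirillov) dimension of $\Htw^d_n$ as an admissible smooth $\SL_n(\Q_p)$-representation, and hence of any irreducible subquotient $V$, is bounded uniformly in $n$.

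The heart of the proof is the following step, a mod-$p$ analogue of Proposition~\ref{prop:GK}: for $n$ sufficiently large, any irreducible admissible smooth mod-$p$ representation of $\SL_n(\Q_p)$ with uniformly bounded canonical dimension must be trivial. This is the main obstacle. In characteristic zero the corresponding statement follows from the Howe--Harish-Chandra character expansion and the minimal-orbit bound on $\mathfrak{gl}_n$, but the failure of characteristic-zero lifting of mod-$p$ irreducible representations (noted in the Remark after Proposition~\ref{prop:GK}) prevents a straightforward translation. Presumably the Herzig-inspired route is to extract a sufficient bound directly in the mod-$p$ setting, via the theory of canonical dimension over the completed group algebra of $\SL_n(\Z_p)$ in the spirit of Ardakov--Wadsley, or by a classification of admissible mod-$p$ representations of small dimension through parabolic induction.

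Granted this bound, every $\SL_n(\Q_p)$-Jordan--H\"older factor of $\Htw^d_n$ is trivial. I would then use that for $n \ge 3$ the Steinberg commutator relations make $\SL_n(\Z_p)$ topologically perfect, so $H^1_{\mathrm{cts}}(\SL_n(\Z_p),\F_p)=0$ and every smooth extension of trivial $\SL_n(\Z_p)$-representations over $\F_p$ splits. Restricting the $\SL_n(\Q_p)$-composition filtration on any finite-dimensional subrepresentation $\Htw^{d,G(p^k)}_n$ down to $\SL_n(\Z_p)$ realises it as a finite iterated extension of trivial $\SL_n(\Z_p)$-modules; induction on length then makes each such subrepresentation $\SL_n(\Z_p)$-trivial, and taking the union over $k$ shows that $\SL_n(\Z_p)$ acts trivially on all of $\Htw^d_n$.

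Finally, $\Htw^d_n = \Htw^{d,\SL_n(\Z_p)}_n = H^d(\Gamma,\F)$, which by Charney is finite-dimensional and independent of $n$ for $n \ge 2d+6$. The plan thus isolates the entirety of the difficulty into the representation-theoretic step of the second paragraph, where the absence of a good characteristic-zero lifting theory in the mod-$p$ case is the genuine obstacle to navigate.
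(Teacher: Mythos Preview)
Your proposal has a fatal gap at the very first step: Charney's stability theorem does \emph{not} apply to $H^d(\Gamma(p^k),\F_p)$ when the coefficient characteristic equals the congruence prime. The paper states this explicitly at the start of \S\ref{section:lp}, and gives the counterexample $H_1(\Gamma(p),\F_p) \simeq (\Z/p\Z)^{n^2-1}$, whose dimension grows with~$n$. So your claim that $\dim_{\F} H^d(\Gamma(p^k),\F)$ is independent of~$n$ for $n\ge 2d+6$ is simply false, and with it the uniform bound on the canonical dimension of $\Htw^d_n$ collapses. The paper replaces Charney by Putman's \emph{central stability} (Lemma~\ref{lemma:putman} and Corollary~\ref{corr:corr}), which is a genuinely different and weaker statement: it does not bound the individual dimensions, but it does bound the Iwasawa dimension of $\Htw_{d,n}$ uniformly in~$n$, which is all that is needed.

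Your second paragraph correctly identifies the representation-theoretic obstacle, but your proposal does not actually resolve it, and the paper confirms that the general statement (Conjecture~\ref{conj:new}) remains open. The paper's substitute is a two-step argument: Lemma~\ref{lemma:induction} proves the dimension bound $\ge n-1$ for every \emph{non-supercuspidal} irreducible admissible mod-$p$ representation (via Herzig's classification and an explicit count of invariants in parabolic inductions), and Lemma~\ref{lemma:induce} shows that no supercuspidal can occur in $\Htw^d_{n+1}$. The latter is the crucial new idea you are missing: one computes the completed cohomology of the parabolic $\PZ$ and uses central stability to embed $\Htw_{n+1}$ into $\Ind^G_P \Htw_n$, whence every subquotient is non-supercuspidal. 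Note that this step genuinely uses the central stability machinery, so even if Charney were available your outline would still need this ingredient to close the supercuspidal loophole.
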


\begin{remark} \emph{We have an isomorphism
$\displaystyle{\Htw_{d,n}:=\Hom(\Htw^{d}_{n},\F) \simeq \varprojlim H_d(\Gamma(p^k),\F)}$.
If we define
$$\displaystyle{\Htw_{d,n}(\Z_p):= \varprojlim H_d(\Gamma(p^k),\Z_p)},$$
then Theorem~\ref{theorem:stable} also implies, by Nakayama's lemma, that $\Htw_{d,n}(\Z_p)$ is a finite $\Z_p$-module for sufficiently large $n$.
Hence Theorem~\ref{theorem:stable}  implies Thoerem~\ref{theorem:MF}.
}
\end{remark}

\begin{remark} 
\emph{ Theorem~\ref{theorem:stable} is immediate
when $d = 0$ and $d = 1$. Indeed, for $d = 0$ one 
has $\Htw^0_n(\F) = \F$ for all~$n$, while for $d = 1$
one has $\Htw^1_n(\F) = 0$ for all $n \ge 3$ by the congruence subgroup property.
}
\end{remark}

\medskip

The proof of Theorem~\ref{theorem:stable} will occupy most of the remainder of the paper.
We begin by recalling some facts concerning non-commutative Iwasawa theory.
 Let  $K = \SL_n(\Z_p)$ 
 and let $K(p^k)$ denote the principal congruence subgroups of $K$.
By construction, the module $\Htw^d_n$ is naturally a module over the completed group
ring $\Lambda = \Lambda_{\F_p}:=\F_p[[K(p)]]$. If $M$ is a $\Lambda$-module, then let $M^{\vee}:=\Hom(M,\F_p)$
be the dual $\Lambda$-module. 
By Nakyama's Lemma, $\Htw_{d,n}$ is finitely generated, which implies (by definition)
 that $\Htw^{d}_n$ is co-finitely generated.
 The ring $\Lambda$ is Auslander regular~\cite{Venjakob}, which implies that there is
a nice notion of dimension and co-dimension of  finitely generated $\Lambda$-modules.
One characterization of dimension for modules is given by the 
following result
 of Ardakov and Brown~\cite{MR2290583}:

\begin{prop} \label{prop:boundtwo}
If $M$ is finitely generated  and $M^{\vee}$ is the co-finitely generated dual, then 
 $M$ has
dimension at most $m$ if and only if, as $k$ increases without bound,
\[ \dim (M^{\vee})^{K(p^k)} \ll p^{mk}. \] 
\end{prop}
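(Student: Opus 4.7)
The route is to realize this as an application of the canonical dimension theory of Ardakov--Brown, since that is the cited reference; the task is really to explain the dictionary between the $\Lambda$-module $M$ and the asymptotic invariant $k \mapsto \dim_\F (M^\vee)^{K(p^k)}$.

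First, I would dualize. Since $M^\vee = \Hom_\F(M,\F)$ is the Pontryagin dual and $K(p^k) \trianglelefteq K(p)$ is closed, one has
$$(M^\vee)^{K(p^k)} \;=\; \Hom_\F\bigl(M/J_k M,\,\F\bigr), \qquad \text{hence} \qquad \dim_\F (M^\vee)^{K(p^k)} \;=\; \dim_\F M/J_k M,$$
where $J_k \triangleleft \Lambda$ denotes the kernel of $\Lambda \onto \F_p[K(p)/K(p^k)]$, i.e.\ the two-sided ideal topologically generated by $\{g-1 : g \in K(p^k)\}$. The problem is thus converted into an assertion about the Hilbert-type function $k \mapsto \dim_\F M/J_k M$.

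Next, I would invoke Lazard. After replacing $K(p)$ by a uniform open normal pro-$p$ subgroup $G$ of dimension $d = n^2-1$ (of finite index, which does not affect the asymptotics), the Iwasawa algebra $\Lambda_G := \F_p[[G]]$ carries its canonical augmentation filtration, whose associated graded is a polynomial ring $\F_p[x_1,\ldots,x_d]$; moreover, Jennings--Lazard theory identifies the augmentation ideal of the $k$-th member of the lower $p$-series of $G$ with the $p^{k-1}$-st power of the augmentation ideal $J$, up to an equivalence of filtrations with uniformly bounded gaps. Under this correspondence the question becomes: show that $\dim_\F M/J^n M$ grows at most like $n^m$ if and only if the Auslander dimension of $M$ is at most $m$, after substituting $n = p^{k-1}$ and noting that $(p^{k-1})^m \asymp p^{mk}$.

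Finally, one chooses a good filtration of $M$ compatible with the $J$-adic filtration of $\Lambda_G$; then $\mathrm{gr}(M)$ is a finitely generated graded module over the polynomial ring $\mathrm{gr}\,\Lambda_G$, and classical Hilbert polynomial theory tells us that $\dim_\F M/J^n M$ is a polynomial in $n$ of degree equal to the Krull dimension of $\mathrm{gr}(M)$. The heart of the matter --- and the technical obstacle --- is the Ardakov--Brown identification of this ``Hilbert-Samuel'' dimension with the Auslander dimension $d - j(M)$, where $j(M) = \min\{i : \Ext^i_\Lambda(M,\Lambda)\neq 0\}$. This invariance rests on the Auslander regularity of $\Lambda_G$ verified by Venjakob together with a careful comparison of different good filtrations (via Rees constructions and microlocalization) showing that the degree of the Hilbert polynomial is independent of the choice of filtration. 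Granting this identification, the biconditional in the statement follows immediately from the degree computation above.
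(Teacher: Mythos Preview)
The paper does not actually prove this proposition: it is stated as a result of Ardakov and Brown, with a bare citation and no argument. Your proposal goes well beyond this, sketching the content of the cited reference itself --- the duality $(M^\vee)^{K(p^k)} \cong (M_{K(p^k)})^\vee$, passage to a uniform subgroup, Lazard's identification of $\mathrm{gr}\,\Lambda$ with a polynomial ring, and the Ardakov--Brown comparison of Hilbert--Samuel growth with the canonical dimension $d - j(M)$. That outline is essentially correct and is the right way to understand why the proposition holds, but as a comparison with the paper there is nothing to compare: the authors simply invoke the result as a black box.
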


The following is the natural analogue of Proposition~\ref{prop:GK} in this context.

\begin{conj} \label{conj:AW} 
Let $M$ be a finitely generated $\Z_p[[K(p)]]$-module. If $M$ is infinite dimensional
over $\F$, then the dimension of $M$ is at least~$n - 1$.
\end{conj}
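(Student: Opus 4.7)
The plan is to establish a mod-$p$ analogue of Proposition~\ref{prop:GK}, using the theory of characteristic cycles for modules over completed group algebras in place of the Harish--Chandra character expansion, which is unavailable in positive characteristic. The target bound $n-1$ should arise, exactly as in the characteristic zero argument, from the fact that the minimal nonzero nilpotent coadjoint orbit in $\mathfrak{sl}_n$ has dimension $2(n-1)$.

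First, I would reduce to the case where $M$ is a simple $\F[[K(p)]]$-module of infinite $\F$-dimension. Replacing $M$ with $M/pM$ (which remains infinite-dimensional by Nakayama's lemma) and passing to a simple subquotient is harmless, since the Auslander regularity of $\F[[K(p)]]$ (via \cite{Venjakob}) ensures that the dimension function on finitely generated modules behaves well on short exact sequences, and the dimension of $M$ is controlled by the maximum dimension of the simple subquotients appearing in a suitable filtration. It therefore suffices to bound $\dim M$ from below for simple modules of infinite $\F$-dimension.

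Second, I would invoke the theory of characteristic varieties for Iwasawa algebras developed by Ardakov--Wadsley and their collaborators. To each finitely generated $\Lambda$-module one attaches a closed, $K$-stable subvariety $V(M) \subset \mathfrak{g}^*$, where $\mathfrak{g}$ is the $\F$-Lie algebra of $K(p)$, satisfying $\dim M = \tfrac{1}{2} \dim V(M)$. Granting that $V(M)$ for a simple infinite-dimensional $M$ is nonzero and supported on the nilpotent cone, the classification of nilpotent $\SL_n$-orbits via partitions of $n$ (cf.\ \S1.3 and Table~1 of~\cite{MR698349}) shows that the smallest nonzero closed $K$-stable subvariety of the nilpotent cone has dimension $2(n-1)$, attained by the closure of the orbit of a rank-one nilpotent. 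This yields $\dim M \geq n-1$, and via Proposition~\ref{prop:boundtwo} the desired growth estimate on $(M^{\vee})^{K(p^k)}$.

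The hard part will be the second step. In the mod-$p$ setting there is no Howe--Harish-Chandra germ expansion to appeal to, and it is not obvious that $V(M)$ is nonzero, $K$-stable in the strong sense needed, and supported in the nilpotent cone for an arbitrary simple infinite-dimensional $\Lambda$-module. A natural strategy, following the suggestion of Dat recorded in the remark after Proposition~\ref{prop:GK}, is to proceed by induction on the closure order of partitions of $n$ (equivalently, on the closure order of nilpotent orbits): representations obtained by parabolic induction from a proper Levi have associated variety contained in the corresponding orbit closure, so one could propagate the bound from smaller Levi subgroups while treating the ``supersingular'' building blocks as the base case. Carrying this out rigorously, especially in the absence of a complete classification of irreducible admissible mod-$p$ representations of $\GL_n(\Q_p)$ for $n > 2$, is presumably the reason the authors have left this statement as a conjecture.
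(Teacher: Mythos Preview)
The statement you are attempting to prove is labeled as a \emph{conjecture} in the paper, and the paper does not prove it. There is therefore no ``paper's own proof'' to compare against. The remark immediately following Conjecture~\ref{conj:AW} explains the situation: the analogue over $\Lambda_{\Q_p}$ is a theorem of Ardakov, and the mod-$p$ case would follow if every pure $\Lambda_{\F_p}$-module lifted to a $p$-torsion-free $\Lambda_{\Z_p}$-module, but whether such liftings exist is unknown.

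Your strategy is a reasonable heuristic and correctly identifies where the bound $n-1$ should come from (half the dimension of the minimal nonzero nilpotent orbit), but the second step is not a proof: the characteristic-variety machinery you invoke, with the equality $\dim M = \tfrac{1}{2}\dim V(M)$ and the nilpotent-cone support property for simple modules, is exactly what is established for $\Lambda_{\Q_p}$ in \cite{Ardakov} and is \emph{not} known for $\Lambda_{\F_p}$. You essentially acknowledge this in your final paragraph, so what you have written is a plausible outline rather than a proof. The inductive scheme on partitions that you sketch at the end is also not carried out, and the ``supersingular base case'' is precisely the hard part with no known approach.

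It may be worth noting how the paper proceeds without Conjecture~\ref{conj:AW}. Rather than proving the conjecture for arbitrary $\Lambda$-modules, the paper exploits the extra structure available on the specific modules $\Htw^d_n$, namely the smooth admissible $\GL_n(\Q_p)$-action. Lemmas~\ref{lemma:induction} and~\ref{lemma:induce} together show that every irreducible constituent of $\Htw^d_{n+1}$ is non-supercuspidal and hence, by Herzig's classification and a direct computation with parabolic induction, already satisfies the dimension bound $\ge n-1$. This bypasses both Conjecture~\ref{conj:AW} and the weaker Conjecture~\ref{conj:new}.
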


\begin{remark} \emph{The analogue of this conjecture for $\Lambda_{\Q_p}$ is true
by  Theorem~A of~\cite{Ardakov}. In particular, if  any finitely generated pure $\Lambda_{\F_p}$-module $M$
is the reduction of
of a $p$-torsion free $\Lambda_{\Z_p} = \Z_p[[K(p)]]$-module, then the conjecture is true. Such a lifting always
exists for \emph{commutative} regular local rings, but it is unclear whether one should expect it to hold
for $\Lambda$.}
\end{remark}

Our arguments  proceed in a manner quite similar to the $\ell \ne p$ case.
One missing ingredient is that
Proposition~\ref{prop:charney}  is no longer valid.  We use the
central stability results of Putman~\cite{Putman} as a replacement.

Let $(V_n)$ be a collection of representations of $S_n$.
Recall from~\cite{Putman} that a sequence
$$\begin{diagram}
 \ldots & \rTo & V_{n-1}  & \rTo_{\phi_{n-1}} & V_n  & \rTo_{\phi_n} & V_{n+1}   & \rTo & \ldots  \end{diagram}$$
 is \emph{centrally stable} if:
 \begin{enumerate}
 \item The $\phi_*$ are equivariant with respect to the natural inclusions on symmetric groups,
 \item For each $n$,  the morphism
$\Ind_{S_n}^{S_{n+1}} V_n \to V_{n+1}$ induced by $\phi_n$ identifies $V_{n+1}$ 
 with the (unique) maximal quotient of
 $\mathrm{Ind}^{S_{n+1}}_{S_n} V_n$ such that the $2$-cycle $(n,n+1)$ acts trivially on $\phi_{n-1}(V_{n-1})$.
 \end{enumerate}

\begin{lemma} \label{lemma:putman} If Theorem~{\em \ref{theorem:stable}} holds for $j < d$,
then the modules $\Htw_{d,n}$ are centrally stable for sufficiently large $n$.
\end{lemma}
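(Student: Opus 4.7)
The plan is to apply Putman's central stability theorem \cite{Putman} at each finite level $k$ and then pass to the inverse limit in $k$, with the inductive hypothesis Theorem~\ref{theorem:stable} for $j<d$ providing the uniformity in $k$ needed to make the limit argument work.

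First, for each fixed $k$, Putman's main theorem identifies $H_d(\Gamma(p^k),\F)$, viewed as a functor of $n$, as a centrally stable sequence in a stable range $n \geq n_0$. Concretely, $H_d(\Gamma(p^k),\F)_{n+1}$ is the maximal quotient of $\Ind_{S_n}^{S_{n+1}} H_d(\Gamma(p^k),\F)_n$ on which the transposition $(n,n+1)$ acts trivially on the image of $H_d(\Gamma(p^k),\F)_{n-1}$. Putman's argument proceeds by analyzing the equivariant homology spectral sequence for the action of $\Gamma(p^k)$ on a highly connected complex of partial bases in $(\Z/Np^k)^n$; the $E^2$-page is built from the homology groups $H_j$ of stabilizers in degrees $j<d$. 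The crucial point is to extract a bound $n_0$ on $n$ that does not depend on $k$. This is exactly where the inductive hypothesis enters: Theorem~\ref{theorem:stable} for $j<d$ says that $\Htw_{j,n}$ stabilizes in $n$ and is finite-dimensional over $\F$, uniformly in $k$; this uniform control of the $E^2$-page across all levels lets us take $n_0$ independent of $k$.

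Second, I would pass to the inverse limit over $k$. Since each $H_d(\Gamma(p^k),\F)$ is a finite-dimensional $\F$-vector space (for $\Gamma(p^k)$ a torsion-free arithmetic group), the inverse systems in question automatically satisfy Mittag--Leffler, so $\varprojlim_k$ preserves the relevant short exact sequences. Induction from $S_n$ (respectively from $S_{n-1}\times S_2$) up to $S_{n+1}$ is a finite direct sum of copies of the underlying module, hence commutes with inverse limits. The quotient presentation at each level therefore descends to the limit, yielding the identification of $\Htw_{d,n+1}$ as the maximal quotient of $\Ind_{S_n}^{S_{n+1}}\Htw_{d,n}$ on which $(n,n+1)$ acts trivially on the image of $\Htw_{d,n-1}$. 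That is precisely the central stability of $(\Htw_{d,n})_n$.

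The main obstacle is the first step: Putman's stability range is a priori a function of both $d$ and the coefficient system or group-theoretic input, and one must verify that the inductive hypothesis is strong enough to make the $E^2$-contributions to the spectral sequence uniformly controlled in $k$. Once such uniformity is established, the inverse-limit argument in the second step is formal, because the finite-dimensionality of each $H_d(\Gamma(p^k),\F)$ kills all higher derived functors of $\varprojlim$.
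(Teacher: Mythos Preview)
Your proposal has a genuine gap in the first step, and it concerns precisely the order in which you take the inverse limit.

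You want to apply Putman's theorem at each finite level $k$ and extract a stability range $n_0$ independent of $k$. You claim the inductive hypothesis provides this uniformity because ``$\Htw_{j,n}$ stabilizes in $n$ and is finite-dimensional over $\F$, uniformly in $k$.'' But $\Htw_{j,n}$ does not depend on $k$ at all --- it is already the inverse limit over $k$ --- so this phrase is not meaningful. The inductive hypothesis controls only the completed groups $\Htw_{j,n}$, and says nothing about the individual finite-level groups $H_j(\Gamma(p^k),\F)$. These finite-level groups are what populate the $E^2$-page of Putman's spectral sequence at level $k$, and they are emphatically \emph{not} uniformly bounded: already $H_1(\Gamma(p^k),\F)$ has dimension growing like $n^2$, even though $\Htw_{1,n}=0$ for $n\ge 3$. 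So the inductive hypothesis gives you no leverage on Putman's finite-level $E^2$-page, and you cannot conclude that the stability range is uniform in $k$. (The remark immediately following the lemma in the paper flags exactly this point: a direct, level-by-level proof with uniform range was at the time only \emph{expected} from forthcoming work of Church--Ellenberg.)

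The paper's argument reverses the order of operations: it first takes the inverse limit of Putman's spectral sequence, obtaining a spectral sequence whose $E^2$-terms are built from the completed homology groups $\Htw_{j,n}$ themselves. Now the inductive hypothesis applies directly: for $j<d$ these are finite-dimensional with trivial $S_n$-action, so the relevant $E^2$-terms reduce to the central stability chain complex for the constant sequence $\F\to\F\to\cdots$ of trivial $S_n$-modules, which is exact (a trivial case of Putman's Proposition~6.1). The required vanishing on the $E^2$-page then yields central stability for $\Htw_{d,n}$ exactly as in Putman. Your Mittag--Leffler observation is what justifies forming this limit spectral sequence, but it must be invoked at the level of the spectral sequence, not after the fact.
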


\begin{proof} This argument is essentially taken directly from Putman~\cite{Putman}, 
and we follow his argument closely.
Putman considers a spectral sequence at a fixed level, which we may take to be
$\Gamma(p^k)$.
Taking the inverse limit, 
one obtains a corresponding spectral sequence for completed homology. In order for this sequence
to degenerate at the relevant terms on page~$2$, it suffices to show that the appropriate $E^{2}_{i,j}$
are actually zero. By assumption, the modules $\Htw_{j,n}$ for $j < d$ are finite dimensional
vector spaces that are independent of $n$ and have a trivial $S_n$-action. Thus it suffices 
(following Putman) to show that the $(n+1)$st central stability  complex for the 
trivial sequence:
$$\F \rightarrow \F \rightarrow \F \rightarrow \ldots$$
of modules for the symmetric group (starting at the trivial group) is exact.
This is a very special case of Proposition~6.1 of~\cite{Putman}, but
can be verified directly in this case. Hence one deduces --- as in Putman --- that
the $\Htw_{d,n}$ are centrally stable.
\end{proof}

\begin{remark} \emph{It is expected that forthcoming work of Church and Ellenberg will
provide a direct proof that  the groups $H_d(\Gamma(p^k),\F)$ are centrally stable. After taking
inverse limits in~$d$, this would imply that the $\Htw_d$ are centrally stable without any induction
hypothesis and
provide another (although not entirely unrelated) proof of Lemma~\ref{lemma:putman}.
}
\end{remark}

\begin{corr} \label{corr:corr} If Theorem~{\em \ref{theorem:stable}} holds for $j < d$,
then the dimension of $\Htw_{d,n}$  as a  $\Lambda_{\F_p}$-module is bounded independently
of $n$.
\end{corr}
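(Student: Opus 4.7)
The plan is to combine the central stability of Lemma~\ref{lemma:putman} with the dimension characterization in Proposition~\ref{prop:boundtwo}. First, Lemma~\ref{lemma:putman} supplies a threshold $n_0 = n_0(d)$ past which central stability yields, by iteration of the step-by-step surjections, an $S_n$-equivariant surjection
$$\mathrm{Ind}_{S_{n_0}}^{S_n} \Htw_{d,n_0} \twoheadrightarrow \Htw_{d,n}$$
for each $n \geq n_0$, dualizing to an $S_n$-equivariant injection $\Htw^d_n \hookrightarrow \mathrm{Ind}_{S_{n_0}}^{S_n} \Htw^d_{n_0}$. Let $m_0$ denote the Iwasawa dimension of the base case $\Htw_{d,n_0}$, which is finite by the inductive hypothesis together with admissibility. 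This $m_0$ is the uniform bound we are aiming for.

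Second, by Proposition~\ref{prop:boundtwo}, the assertion $\dim_{\Lambda_{\F_p}} \Htw_{d,n} \leq m_0$ is equivalent, for each fixed $n$, to the growth estimate
$$\dim_{\F}(\Htw^d_n)^{K(p^k)_n} \ll p^{m_0 k}$$
as $k \to \infty$, where the implicit constant may depend on $n$. Via the dual injection and the decomposition of the induced module into its $[S_n : S_{n_0}] = n!/n_0!$ summands, I would bound the left-hand side by a constant (in $k$) multiple of $\dim_{\F}(\Htw^d_{n_0})^{K(p^k)_{n_0}}$. Proposition~\ref{prop:boundtwo} applied at level $n_0$ gives the latter as $\ll p^{m_0 k}$, so the bound follows with the correct exponent $m_0$, the coset index being harmless since it is independent of $k$.

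The main obstacle is justifying this coset-counting estimate. The dual injection is only $S_n$-equivariant, whereas $\mathrm{Ind}_{S_{n_0}}^{S_n} \Htw^d_{n_0}$ does not carry a natural action of the pro-$p$ group $K(p^k)_n$: the subgroup $K(p^k)_{n_0}$ does not commute with $S_{n_0}$ on $\Htw^d_{n_0}$, so the naive definition of the $K(p^k)_{n_0}$-action on the induced module is not well defined. The resolution is to bound $(\Htw^d_n)^{K(p^k)_n} \subseteq (\Htw^d_n)^{K(p^k)_{n_0}}$ using the block inclusion $K(p^k)_{n_0} \hookrightarrow K(p^k)_n$, and then to analyze each summand $\sigma \otimes \Htw^d_{n_0}$ using the naturally-acting conjugate subgroup $\sigma K(p^k)_{n_0} \sigma^{-1} \subset K(p^k)_{n+1}$, whose action matches the action on the image of the injection under the stabilization map $\Htw_{d,n_0} \to \Htw_{d,n}$. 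A case-by-case analysis of the $S_n/S_{n_0}$-cosets then produces the required estimate with exponent $m_0$, after which the corollary follows from Proposition~\ref{prop:boundtwo}.
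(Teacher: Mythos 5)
Your proposal is correct and follows essentially the same strategy as the paper's proof: dualize the central-stability surjection $\Ind^{S_n}_{S_{n_0}} \Htw_{d,n_0} \onto \Htw_{d,n}$ to get an injection of $\Htw^d_n$ into a finite direct sum of copies of $\Htw^d_{n_0}$, observe that each component map sends $K(p^k)_n$-invariants to $K(p^k)_{n_0}$-invariants (via the relevant embedded copy of $\SL_{n_0}(\Z_p)$), and feed the resulting dimension estimate into Proposition~\ref{prop:boundtwo}. The only difference is bookkeeping: the paper iterates a single step $n\to n+1$, picking up a factor of $n+1$ each time, whereas you pass from $n_0$ to $n$ all at once with the factor $[S_n:S_{n_0}]$ — both constants are independent of $k$, so the exponent in Proposition~\ref{prop:boundtwo} is unaffected.
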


\begin{proof} This is obvious for any fixed collection of $n$. Yet, for $n$ sufficiently large, 
central stability implies that the natural map
$$\Ind^{S_{n+1}}_{S_n} \Htw_{d,n} \rightarrow \Htw_{d,n+1}$$
induced by the $n+1$ embeddings of $\SL(n)$ into $\SL(n+1)$ is surjective.
Passing to cohomology, the natural maps $\Htw^{d}_{n+1} \rightarrow \Htw^{d}_n$ induce
homomorphisms from the $\Gamma_{n+1}(p^k)$-invariants
of $\Htw^{d}_{n+1}$ to the $\Gamma_n(p^k)$-invariants of $\Htw^{d}_n$. In particular, the
dimension of the $\Gamma_{n+1}(p^k)$-invariants of $\Htw^{d}_{n+1}$  is at most $n$ times the dimension of the 
$\Gamma_n(p^k)$-invariants of $\Htw^{d}_n$, and so it
 follows by 
Proposition~\ref{prop:boundtwo} that 
$\Htw_{d,n+1}$ has Iwasawa dimension
bounded by the Iwasawa dimension of~$\Htw_{d,n}$. 
\end{proof}

\begin{remark} \emph{Using Putman's spectral sequence, one may prove unconditionally 
that 
$$\dim_{\F} H^d(\Gamma(p^k),\F) \ll p^{m k}$$
for some constant $m$ that does not depend on $n$ (although the implied constant does depend on $n$).
This leads to an alternate proof of Corollary~\ref{corr:corr} via the Hochschild--Serre spectral sequence
and Proposition~\ref{prop:boundtwo}.
}
\end{remark}

We prove Theorem~\ref{theorem:stable} by induction.
By the proceeding corollary, 
the dimension of $\Htw_{d,n}$ must be bounded for sufficiently large $n$.
If we assume Conjecture~\ref{conj:AW}, then we immediately deduce that this
dimension is actually zero for sufficiently large $n$, and hence
$\Htw_{d,n}$ is finite and
 the action of $\SL_n(\Q_p)$ on $\Htw_{d,n}$ is trivial. In particular, the action of $S_n$
 must also be either trivial, or via the sign character, for sufficiently large $n$.
 By Lemma~\ref{lemma:putman}, we also deduce
 that the sequence $\Htw_{d,n}$ is centrally stable.
 This rules out the possibility that $S_n$ acts via the sign character (for large
 enough $n$), since these do not fit into a centrally stable sequence.
 It follows that we must have a centrally stable sequence of trivial $S_n$-representations.
Yet a centrally stable sequence of trivial modules is stable, and hence $\Htw_{d,n}$ stabilizes.

How may we upgrade this to an unconditional proof?
One thing to observe is that the modules $\Htw_{d,n}$ have an action of $\SL_n(\Q_p)$ that has not been
exploited. For technical reasons, it is actually more convenient to have an action of the group $\GL_n(\Q_p)$.  In order to upgrade
the action of $\SL_n(\Q_p)$ to $\GL_n(\Q_p)$, one should take the direct limit (in cohomology) as follows:
$$ \varinjlim_{K_{p}}  H^d(Y(K),\F).$$
From now on, we use $\Htw^d_n$ to denote this limit
(rather than the limit over the cohomology of congruence subgroups that it denoted up till now).

With this new definition,
$\Htw^d_n$ is cofinitely generated over $\F_p[[\GL_n(\Z_p)]]$ rather than $\Lambda_{\F_p} = \F_p[[\SL_n(\Z_p)]]$.
  It then suffices to show that these completions are eventually stable
and finite over the ring $\F_p[[\det(\GL_n(\Z_p))]] = \F_p[[\Z^{\times}_p]]$. 
Instead of appealing to a general conjecture concerning
finitely generated $\Lambda_{\F_p}$-modules, we only need consider such modules whose dual admits a smooth
admissible action of $\GL_n(\Q_p)$. In particular,
 it suffices to show that the only irreducible
$\GL_n(\Q_p)$-representations that occur as sub-quotients of these completions are either trivial or
have large dimension. So, instead of proving Conjecture~\ref{conj:AW}, it suffices to prove the following:

\begin{conj}
\label{conj:new} Let $\pi$ be an irreducible 
infinite-dimensional smooth admissible representation
of $\GL_n(\Q_p)$, and let $\pi^{\vee} = \Hom(\pi,\F_p)$. Then the
dimension of $\pi^{\vee}$  
 as a
$\Lambda_{\F_p}$-module is~$\ge n-1$. 
\end{conj}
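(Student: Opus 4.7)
The plan is to reduce Conjecture~\ref{conj:new} to the case of supersingular representations via the Abe--Henniart--Herzig--Vign\'eras classification, following Dat's suggested induction on partitions. Every irreducible admissible smooth $\F_p$-representation $\pi$ of $\GL_n(\Q_p)$ is a specific subquotient $I(P,\sigma,Q)$ of a parabolic induction $\mathrm{Ind}_P^G \sigma$, where $\sigma$ is a supersingular representation of a standard Levi $M = \GL_{n_1}(\Q_p) \times \cdots \times \GL_{n_r}(\Q_p)$ extended by a character of an intermediate parabolic. The associated partition $(n_1,\ldots,n_r)$ of $n$ serves as the induction parameter. Preliminary to the main argument, one records the additivity of Iwasawa dimension over short exact sequences (valid because $\Lambda_{\F_p}$ is Auslander regular) and the equivalence, from Proposition~\ref{prop:boundtwo}, between dimension $d$ and growth $\dim \pi^{K(p^k)} \asymp p^{dk}$.

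For the inductive step, suppose $r \geq 2$, so that $P = MN$ is a proper parabolic. Using the Iwasawa decomposition $G = PK$, the invariants $(\mathrm{Ind}_P^G \sigma)^{K(p^k)}$ decompose Mackey-style as a direct sum indexed by the double cosets $(P \cap K) \backslash K / K(p^k)$, whose number grows like $p^{k \dim N}$ with $\dim N = \sum_{i < j} n_i n_j$. Each summand contains a copy of $\sigma^{M \cap K(p^k)}$, and induction on $n$ applied separately to each Levi factor $\GL_{n_i}$ gives $\dim \sigma^{M \cap K(p^k)} \gtrsim p^{k(n-r)}$. Combining these yields $\dim \pi^\vee \geq \sum_{i<j} n_i n_j + (n-r) \geq n - 1$, the last inequality from $\sum_{i<j} n_i n_j \geq \binom{r}{2} \geq r-1$ whenever $r \geq 2$. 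Additivity of dimension transfers the bound from the full induction to the irreducible subquotient $I(P,\sigma,Q)$.

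The genuine obstacle is the case $r = 1$: representations $\pi = \sigma$ that are themselves supersingular on $\GL_n(\Q_p)$. Here the classification is essentially vacuous and the bound $\dim \pi^\vee \geq n - 1$ must be established directly. For $n = 2$ this is known by work of Pa\v{s}k\={u}nas and Schneider--Vign\'eras. For $n \geq 3$ one would look either for (a) a non-vanishing Whittaker-type functional $\pi \to \mathrm{Ind}_U^G \psi$ for some generic character $\psi$ of the unipotent radical of the Borel, which would embed a large enough compactly induced representation into the dual and force the required growth $\dim \pi^{K(p^k)} \gtrsim p^{k(n-1)}$, or (b) the finer structural input on mod-$p$ supersingulars developed by Breuil--Herzig--Hu--Morra--Schraen. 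This supersingular case is precisely the source of difficulty acknowledged by the authors when they state Conjecture~\ref{conj:new} rather than prove it, and it is where the main work of any eventual proof will lie.
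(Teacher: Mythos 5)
This statement is labeled a \emph{conjecture} in the paper, and the authors explicitly do not prove it: immediately after stating it they write ``Although this conjecture is presumably easier than Conjecture~\ref{conj:AW}, we still do not know how to prove it.'' Your proposal correctly identifies that the supersingular (supercuspidal) case is the genuine obstruction, and your argument for the non-supersingular case essentially reconstructs Lemma~\ref{lemma:induction} in the paper: use Herzig's classification to write $\pi$ as a subquotient of $\mathrm{Ind}_P^G \sigma$ for a proper parabolic $P$, then use the Iwasawa decomposition $G = KP$ and normality of $K(p^k)$ to see that the invariants grow at least like $[G(\Z/p^k):P(\Z/p^k)] \asymp p^{k\dim(G/P)}$, and finally observe $\dim(G/P) \geq n-1$ for every proper parabolic. (You also add a contribution of $n-r$ from the invariants of $\sigma$; the paper does not bother, since $\dim(G/P) \geq n-1$ already suffices — and as you phrase it, invoking the conjecture inductively for the supersingular factors of $\sigma$ is circular, so it is safer to drop that term.)

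The real divergence is strategic rather than technical. You are trying to prove the conjecture as stated, and you have honestly flagged that you cannot close the supersingular case; that assessment is accurate, and case (a) and (b) of your sketch are speculative. The paper instead \emph{sidesteps} the conjecture: it proves Lemma~\ref{lemma:induction} (the non-supersingular half, as above) together with Lemma~\ref{lemma:induce}, which shows that supercuspidal representations simply do not occur as irreducible constituents of the completed cohomology $\Htw^d_{n+1}$ — the key input there being central stability plus the $P$-equivariant map $\Htw_{n+1} \to \Ind_P^G \Htw_n$ coming from the Levi $\Q_p^\times \times \GL_n(\Q_p)$. Together these two lemmas serve as a full replacement for Conjecture~\ref{conj:new} in the proof of Theorem~\ref{theorem:stable}. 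So the gap in your proposal is precisely the one you named, but it is worth realizing that for the application in the paper one does not need to fill it: one only needs to rule out supercuspidals in the particular representations $\Htw^d_n$, which is a much softer statement. A minor point: the suggestion of Dat that you cite concerns the $\ell \neq p$ case (lifting irreducible mod-$p$ representations to virtual characteristic-zero representations, after Vign\'eras), not this $\ell = p$ conjecture.
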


Although this conjecture is presumably easier than Conjecture~\ref{conj:AW}, we still do not know how to 
prove it. Instead, we prove the following two lemmas:

\begin{lemma} \label{lemma:induction}
Let $\pi$ be an irreducible 
infinite-dimensional  smooth admissible representation of $\GL_n(\Q_p)$,
and let $\pi^{\vee} = \Hom(\pi,\F_p)$. If $\pi$
is not a supercuspidal representation, then the
dimension of $\pi^{\vee}$  
 as a
$\Lambda_{\F_p}$-module is~$\ge n-1$. 
\end{lemma}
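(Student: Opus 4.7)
The strategy is to use the non-supercuspidality to realize $\pi$ inside a parabolic induction, and then to lower-bound the growth of $K(p^m)$-invariants of $\pi$ via the Iwasawa decomposition, invoking Proposition~\ref{prop:boundtwo} to convert the growth bound into the required lower bound on the Iwasawa dimension of $\pi^\vee$.

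Since $\pi$ is non-supercuspidal, the mod-$p$ analogue of the Bernstein--Zelevinsky classification (Vign\'{e}ras, Herzig, Abe) produces a proper standard parabolic $P = MN \subsetneq \GL_n(\Q_p)$ and an irreducible admissible $M$-representation $\sigma$ for which $\pi$ embeds into $\mathrm{Ind}_P^G \sigma$; equivalently, by Frobenius reciprocity, the Jacquet module $\pi_N$ is non-zero. I would take $P$ to be a proper maximal parabolic, so that $M \cong \GL_k(\Q_p) \times \GL_{n-k}(\Q_p)$ with $1 \leq k \leq n-1$, giving $\dim N = k(n-k) \geq n-1$.

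The heart of the argument is to establish $\dim \pi^{K(p^m)} \gg p^{\dim N \cdot m}$. The plan is to use Emerton's mod-$p$ analogue of Casselman's lemma, obtained through the ordinary-parts functor $\mathrm{Ord}_P$, which supplies a surjection $\pi^{N_0^-} \twoheadrightarrow \pi_N$ for any compact open $N_0^- \subset N^-$ that is sufficiently large. Fixing a non-zero $K_M(p^{m_0})$-invariant $\bar v \in \pi_N$, lifting it to some $v \in \pi^{N_0^-}$, and exploiting the Iwahori decomposition $K(p^m) = (K \cap N^-)(p^m) \cdot K_M(p^m) \cdot (K \cap N)(p^m)$ (valid for $m$ large), one averages over coset representatives of $(K \cap N)(p^m)$ inside $(K \cap N)(p^{m_0})$ to produce a family of roughly $p^{\dim N \cdot m}$ elements of $\pi^{K(p^m)}$. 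Their linear independence is checked by projecting down to $\pi_N$, where the admissibility of $\pi_N$ together with the distinguishable pre-Jacquet action of $N$-translates allows one to separate them.

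The main obstacle is the linear-independence step, because the mod-$p$ Jacquet functor is not exact and translates that are linearly independent in $\pi$ can collapse inside the kernel of $\pi \to \pi_N$. The cleanest remedy, I expect, is to work systematically with $\mathrm{Ord}_P$ throughout: it is an admissible-preserving right adjoint to parabolic induction whose behaviour under taking invariants under $K_M(p^m)$ and $(K \cap N)(p^m)$ matches the needed estimates, reducing the growth of $\pi^{K(p^m)}$ to the growth of $\mathrm{Ord}_P(\pi)^{K_M(p^m)}$, which is non-zero and admissible over $M$. Combined with $\dim N \geq n-1$ and Proposition~\ref{prop:boundtwo}, this yields Iwasawa $\dim \pi^\vee \geq n-1$ and completes the argument.
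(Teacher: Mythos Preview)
Your strategy and the paper's agree on the overall shape---use Proposition~\ref{prop:boundtwo} and bound $\dim \pi^{K(p^k)}$ from below by $p^{(n-1)k}$---but you are missing the key structural input that makes the paper's argument short, and the workaround you propose is left genuinely incomplete.

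The paper does not merely embed $\pi$ into a parabolic induction; it invokes Herzig's classification \cite{Herzig} to conclude that (apart from the Steinberg, handled separately) a non-supercuspidal irreducible admissible $\pi$ is \emph{equal} to some $\Ind_P^G(\sigma)$ with $P$ proper. Once one has equality, the Iwasawa decomposition $G = KP$ gives $\pi|_K = \Ind_{P\cap K}^K \sigma$, and since $K(p^k)$ is normal in $K$ one computes directly
\[
\pi^{K(p^k)} \;=\; \Ind_{P(\Z/p^k)}^{G(\Z/p^k)}\bigl(\sigma^{M\cap K(p^k)}\bigr),
\]
whose dimension is at least $[G(\Z/p^k):P(\Z/p^k)]\asymp p^{k\dim(G/P)}$. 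No Jacquet modules, no $\mathrm{Ord}_P$, no linear-independence arguments are needed.

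By contrast, your route starts from a mere embedding $\pi \hookrightarrow \Ind_P^G\sigma$ (equivalently, $\pi_N \neq 0$), which a~priori only \emph{upper}-bounds invariants of $\pi$. To get a lower bound you then try to manufacture many $K(p^m)$-invariants by averaging lifts from $\pi_N$ and separating them via the projection $\pi\to\pi_N$. You correctly identify the obstacle: in characteristic~$p$ the Jacquet functor is not exact, and there is no Casselman-type canonical lifting, so the translates you build can collapse. Your proposed remedy via $\mathrm{Ord}_P$ is only a sketch: the assertion that ``the growth of $\pi^{K(p^m)}$ reduces to the growth of $\mathrm{Ord}_P(\pi)^{K_M(p^m)}$'' with the missing factor $p^{m\dim N}$ is exactly the statement that needs proof, and you have not supplied it. (It can be made to work, but it is real work, and nothing in your outline pins down why the $N$-directions contribute the full $p^{m\dim N}$ to $\pi$ rather than to the ambient induction.)

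In short: the gap is that you never use the fact, specific to mod~$p$ and furnished by Herzig, that $\pi$ \emph{is} a parabolic induction rather than merely a subobject of one. With that in hand the invariant estimate is a one-line Mackey computation; without it you are forced into the delicate (and here unfinished) $\mathrm{Ord}_P$ argument.
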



\begin{lemma} \label{lemma:induce}
Any
 infinite-dimensional irreducible smooth admissible
representation $\pi$ that occurs as a sub-quotient of $\Htw^{d}_{n+1}$ 
 is not supercuspidal.
\end{lemma}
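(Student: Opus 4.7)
The plan is to show that $\Htw^d_{n+1}$ embeds as a $\GL_{n+1}(\Q_p)$-subrepresentation of a parabolic induction from a proper parabolic, which by exactness of parabolic induction forces every irreducible subquotient of $\Htw^d_{n+1}$ to be a subquotient of a parabolic induction and hence not supercuspidal.

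Concretely, let $P \subset \GL_{n+1}(\Q_p)$ be the maximal parabolic stabilizing the line $\Q_p \cdot e_{n+1}$, with Levi $L = \GL_n \times \GL_1$ and unipotent radical $U$. The upper-left block embedding $\SL_n(\Z) \hookrightarrow \SL_{n+1}(\Z)$ (one of the $n+1$ embeddings appearing in the central stability of Lemma~\ref{lemma:putman} and Corollary~\ref{corr:corr}) factors through the integral Levi. The induced restriction map on completed cohomology
$$\res : \Htw^d_{n+1} \longrightarrow \Htw^d_n$$
is $\GL_n(\Q_p)$-equivariant by functoriality. Regarding $\Htw^d_n$ as a smooth admissible $P$-representation by inflation (so that the $\GL_1$-factor of $L$ and the entire unipotent radical $U$ both act trivially), the key claim to establish is that $\res$ is in fact $P$-equivariant. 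Granting this, Frobenius reciprocity produces a $\GL_{n+1}(\Q_p)$-equivariant map
$$\Phi : \Htw^d_{n+1} \longrightarrow \Ind_P^{\GL_{n+1}(\Q_p)} \Htw^d_n.$$
Dualizing the central stability surjection of Corollary~\ref{corr:corr} yields an injection of $\Htw^d_{n+1}$ into $n+1$ copies of $\Htw^d_n$, coming from restriction along the $n+1$ coordinate-hyperplane embeddings, which by Weyl group symmetry correspond to the evaluations of $\Phi$ at the $n+1$ coordinate points of $\GL_{n+1}(\Q_p)/P \cong \mathbb{P}^n(\Q_p)$. Injectivity of these evaluations forces injectivity of $\Phi$ itself, embedding $\Htw^d_{n+1}$ as a subrepresentation of a parabolic induction. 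The contradiction with supercuspidality of $\pi$ then follows from exactness of parabolic induction and the definition of supercuspidal.

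The principal obstacle is the $P$-equivariance of $\res$: one must show that the Hecke action of $U(\Q_p)$ on $\Htw^d_{n+1}$ acts trivially after composition with restriction to $\Htw^d_n$. This is not purely formal, since the $\GL_{n+1}(\Q_p)$-action on completed cohomology is defined through Hecke correspondences on the arithmetic tower, and one must verify that these are compatible with the inclusion of the $\SL_n$-subvariety in the required manner. A cleaner route that I would favor is to bypass this step via Emerton's derived Jacquet-module formalism for completed cohomology, which produces a spectral sequence computing $J_P(\Htw^\bullet_{n+1})$ in terms of $\Htw^\bullet$ of the Levi $L$. Combined with the inductive stability hypothesis (guaranteeing that $\Htw^d_n$ is nonzero in the appropriate degrees), this directly yields $J_P(\Htw^d_{n+1}) \neq 0$, and the induced non-vanishing $J_P(\pi) \neq 0$ for some subquotient $\pi$ contradicts supercuspidality.
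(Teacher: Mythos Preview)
Your overall strategy --- embed $\Htw^d_{n+1}$ into a parabolic induction $\Ind_P^G \Htw^d_n$ via Frobenius reciprocity, use central stability for injectivity, then invoke the definition of supercuspidal --- is exactly the paper's strategy. The injectivity argument and the concluding step are essentially identical to what the paper does.

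However, you have not closed the gap you yourself flag: the $P$-equivariance of the restriction map, and in particular the triviality of the $U(\Q_p)$-action after restriction. The paper resolves this by passing through an intermediate object: the completed cohomology $\Htw^d_P$ of the \emph{arithmetic parabolic} $\Gamma_P := P \cap \Gamma_{n+1}$. The map $\Htw^d_{n+1} \to \Htw^d_P$ is $P(\Q_p)$-equivariant for structural reasons, since it comes from an inclusion of arithmetic groups compatible with the $P(\Q_p)$-action. The remaining step is to identify $\Htw^d_P$ with $\Htw^d_n$. Writing $\Gamma_P(p^k) \simeq (p^k\Z)^n \rtimes \Gamma_n(p^k)$ and running Hochschild--Serre, the transition maps on $H^j\bigl((p^k\Z)^n,\F_p\bigr) = \wedge^j (\F_p)^n$ induced by $(p^{k+1}\Z)^n \hookrightarrow (p^k\Z)^n$ are multiplication by $p^j$, hence zero for $j \ge 1$; in the direct limit only the $j=0$ row survives, giving $\Htw^d_P \simeq \Htw^d_n$. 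This is precisely the mechanism that makes the $U$-action trivialize, and it is the idea missing from your argument.

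Your proposed alternative via the Jacquet functor does not prove the lemma as stated. Knowing $J_P(\Htw^d_{n+1}) \neq 0$ would only guarantee that \emph{some} irreducible subquotient of $\Htw^d_{n+1}$ is non-supercuspidal; the lemma requires this for \emph{every} irreducible subquotient. Moreover, in the mod-$p$ setting the Jacquet functor is only right exact, so even transferring nonvanishing from the whole module to a given subquotient is delicate. The embedding into $\Ind_P^G$ is genuinely needed, and the $\Gamma_P$ computation above is what supplies it.
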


Taken together, these lemmas serve as a replacement for either Conjecture~\ref{conj:new}
or Conjecture~\ref{conj:AW}, and imply the main theorem.
To recapitulate  the argument, from Lemma~\ref{lemma:induce}, we deduce that
every irreducible constitutent of $\Htw^{d}_{n+1}$ is not supercuspidal.
From Lemma~\ref{lemma:induction}, we deduce that either $\Htw^{d}_{n+1}$
contains a representation of dimension at least $n$, which contradicts the uniformly bounded
dimension of $\Htw^{d}_{n+1}$ for all $n$, or the only
irreducible constituents of  $\Htw^{d}_{n+1}$ are finite, in which case the action
of $\SL_n(\Q_p)$ is also trivial.

\begin{proof}[Proof of Lemma~\ref{lemma:induction}]  
Let $G$ denote $\GL_n(\Q_p)$ and let $B$ denote a Borel of $G$. The Lemma is easy to verify directly for
the Steinberg representation, for which the Iwasawa dimension of the dual is  equal to $\dim(G/B)$.
By the main theorem of~\cite{Herzig}, we may therefore assume that  there exists a proper
parabolic $P$ with Levi $M$ such that:
$$\pi = \Ind^{G}_{P}(\sigma_1 \otimes \ldots \otimes \sigma_r) = \Ind^{G}_{P}(\sigma).$$

The dual of such an induced representation has dimension equal to the sum of the dimension
of the dual of $\sigma$ and the dimension of $G/P$, and hence
has dimension at least equal to that of $G/P$.  It is easy enough to prove
this directly, but we instead give a proof here using the chacacterization of dimension
coming from Proposition~\ref{prop:boundtwo} (which applies just as well to $\GL_n(\Z_p)$
as it does to $\SL_n(\Z_p)$).

Using the Iwasawa decomposition $G = KP$ (where now $K$ denotes $\GL_n(\Z_p)$),
we see that we may
rewrite $\pi$ (thought of as a $K$-representation) as
$\pi = \Ind_{P \cap K}^K (\sigma)$. A short calculation then shows, since the level $p^k$ congruence subgroup $K(p^k)$ is normal in $K$, that
$$\pi^{K(p^k)} = \Ind_{P(\Z/p^k)}^{G(\Z/p^k)} (\sigma^{M \cap K(p^k)}).$$
Now for $k > 0$, the representation inside the induction is nonzero. Hence 
$$\dim \pi^{K(p^k)} \ge [G(\Z/p^k):P(\Z/p^k)] \sim p^{mk},$$
  where $m = \dim(G/P)$. As claimed,
 this gives a lower bound of $\dim(G/P)$ on the Iwasawa dimension 
 of~$\pi$.

 The minimum value of
$\dim(G/P)$ amongst all proper parabolics is~$n-1$, and so the lemma follows.
\end{proof}

\begin{proof}[Proof of Lemma~\ref{lemma:induce}]  
 Let $M \subset \GL_{n+1}(\Q_p)$ denote the Levi subgroup $\Q^{\times}_p \times \GL_{n}(\Q_p)$ of
 $\GL_{n+1}(\Q_p)$,
and let $P$ be the corresponding parabolic. Associated to $P$ is a natural arithmetic group  $\PZ \subset \GL_{n+1}(\Z)$.
Let  $\PZ(p^k) = \PZ \cap \Gamma_{n+1}(p^k)$. The group $\PZ(p^k)$ is a 
semi-direct product:
$$\PZ(p^k) \simeq (p^k \Z)^{n} \rtimes \Gamma_{n}(p^k).$$
Let us compute the completed cohomology groups of $\PZ$.
There is a Hochschild--Serre spectral sequence:
$$H^i(\Gamma_{n}(p^k),H^j((p^k \Z)^{n},\F_p)) \Rightarrow H^{i+j}(\PZ(p^k),\F_p).$$
There is an isomorphism
$H^j(p^n \Z,\F_p) = \wedge^j (\F_p)^{n}$,
but the composition maps from $\PZ(p^k)$ to $\PZ(p^{k+1})$ induce the zero map on these cohomology groups
unless $j = 0$. Hence, taking direct limits, we get an isomorphism:
$$ \Htw^d_{P} :=\varinjlim H^d(\PZ(p^n),\F_p) \simeq \Htw^d_{n}.$$
Omit the degree $d$ from the notation.
We therefore have maps as follows:
$$\Htw_{n+1} \rightarrow \Htw_{P} \rightarrow \Htw_{n}.$$
The first map is $P$-equivariant, the second is an isomorphism, and the composition is 
the natural map. 
It follows that there exists a $G$-equivariant map
$$\Htw_{n+1} \rightarrow \Ind^{G}_{P} \Htw_{n}.$$
Note that $S_{n+1} \cap P = S_{n}$, and hence the right hand side includes the induction from
$\Htw_{n}$ from $S_{n}$ to $S_{n+1}$. Hence, by central stability, we actually have an injection:
$$\Htw_{n+1} \rightarrow \Ind^{G}_{P} \Htw_{n}.$$
To complete the proof of the lemma, it suffices to show
that no constituent of the right hand side is supercuspidal. But this also follows immediately from
Herzig's classification of irreducible admissible representations of $G$~\cite{Herzig},
since $\Htw_{n}$ is a smooth
admissible $M$-representation, where $M$ is the Levi of $P$.
\end{proof}

\subsection{Consequences for classical cohomology groups}
The homology (or cohomology) of arithmetic groups with $\F$ coefficients
can be recovered from completed homology via
the Hochschild--Serre spectral sequence \cite{CE}.
For example,
if we take our arithmetic group to be the principal congruence subgroup
$\Gamma(p)$ of $\SL_n(\Z)$, then, 
recalling that $G(p)$ is the principal subgroup of $\SL_n(\Z_p)$,
this spectral sequence has the form
$$E_2^{i,j} := H^i\bigl(G(p),\Htw^j\bigr) \implies H^{i+j}\bigl(\Gamma(p),
\F\bigr).$$
As noted above, $\Htw^0 = \F$, and 
Theorem~\ref{theorem:stable} implies that each $\Htw^j$ is finite
with trivial
$G(p)$-action.
Hence Theorem~\ref{theorem:stable} implies that
 the cohomology group $H^d(\Gamma(p),\F)$ has a filtration consisting of three types of classes:
those arising  via pullback from $H^d(G(p),\F)$, those arising via (higher) transgressions from
$H^j$ of $G(p)$ for $j < d$, and those arising from
$\Htw^d$ (which do not depend on $n$).
In this optic, the notion of \emph{representation stability}
developed by Church and Farb~\cite{CF} is then seen (in this context) to have its origin
in the mod-$p$ cohomology of $p$-adic Lie
groups, rather than in the properties of arithmetic groups. 

One
interpretation of Theorem~\ref{theorem:stable} is that the $\ell = p$ and $\ell \ne  p$ theories after
completion are quite similar. The difference in phenomenology  between these two cases
is then a consequence of the vanishing
of $H^i(G(\ell^k),\F)$ when $k \ge 1$ and the chararacteristic $p$
of $\F$ is not $\ell$. Another way to state these results is to say
that the only  irreducible $\GL_n(\A^{\infty})$-representations
occurring inside cohomology in sufficiently small degree are one-dimensional.

\subsection{Relation to \texorpdfstring{$K$}{K}-theory}

Even knowing that $\Htw_{d}$ is finite,
it is not at all apparent how to compute it.
One might wonder whether there is some interpretation of $\Htw_{d}$ in terms of $K$-theory
(some related speculation along these lines occurs in~\S8.3 of~\cite{CV}).
This question is taken up in the companion
paper~\cite{CK}.

\bibliographystyle{amsalpha}
\bibliography{Stable}
 
 \end{document}